\documentclass[reqno]{amsart}
\usepackage{amsthm, amsfonts, amssymb, color}
\usepackage{mathrsfs}
\usepackage{geometry}
\usepackage{caption}
\usepackage{enumerate}
\usepackage{cases}
\usepackage{amsmath}
\usepackage{mathbbol}
\usepackage{footnote}
\usepackage{amstext, amsxtra}
\usepackage{txfonts}
\usepackage{tikz}
\usepackage{enumitem}
\usepackage[colorlinks, linkcolor=black, citecolor=blue,   hypertexnames=false]{hyperref}
\usepackage{graphicx}
\usepackage{subfigure}
\usepackage[symbol]{footmisc}
\allowdisplaybreaks
\def\R {\mathbb{R}}

\def\x{\boldsymbol{x}}

\def\d{{\rm d}}

\def \and {{\qquad\text{and}\qquad}}

\newtheorem{theorem}{Theorem}[section]

\newtheorem{lemma}[theorem]{Lemma}
\newtheorem{definition}[theorem]{Definition}

\newtheorem{remark}[theorem]{Remark}

\numberwithin{equation}{section}
\theoremstyle{definition}

\title[]
{Fractal Tur\'{a}n--Nazarov Inequality and Observability for  Schr\"{o}dinger Equations}

\author{Jiaqi Yu \quad Shanlin Huang }

\address{Jiaqi Yu,  School of Mathematics and Statistics, Huazhong University of Science and Technology,  Wuhan,  430074,  P.R. China}
\email{jiaqi\_yu@hust.edu.cn}
\address{Shanlin Huang,  School of Mathematics (Zhuhai), Sun Yat-sen University, Zhuhai 519082, Guangdong, China}
\email{huangshlin6@mail.sysu.edu.cn}

\subjclass[2010]{42B10; 35A02}
\keywords{Tur\'{a}n--Nazarov inequality, observability, unique continuation}

\begin{document}
	
	\begin{abstract}
		This paper establishes limitations on observability inequality and unique continuation for Schr\"{o}dinger equations on fractal sets. We prove that,  in contrast to the heat equation, such properties can fail in fractal settings.
		To achieve this, we first extend the classical  Tur\'{a}n--Nazarov inequality, which provides lower bounds of trigonometric polynomials of the form $\sum_{k=1}^nc_ke^{2\pi im_kt}$ on sets of positive measure, to the fractal setting. Unlike in the classical case,  the constant in the inequality loses uniformity in the degree $n$,  and we obtain sharp bounds depending on both $n$ and the frequency difference $m_n-m_1$. 
		These refinements  then enable us to construct explicit counterexamples, showing that observability and unique continuation may fail for Schr\"{o}dinger equations when the observation set is fractal. 
		
	\end{abstract}
	
	\maketitle
	

	\section{Introduction and main results}\label{section1}
	
	Inequalities that control a function's values on a set by its values on a smaller subset are known by various names, including Remez-type inequalities, propagation of smallness, or  quantitative unique continuation. Recently, such estimates have become important in control theory, particularly for spectral subspaces of the Laplace operator, see for instance in \cite{ApraizNullcontrol2013,BM,EV,Lebeau1995contr,N,wang2019,WWZ} and references therein.
	
	In the simplest case, where the domain is the interval $[0,1]$, the Laplacian eigenfunctions are complex exponentials, in whcih case the functions of interest are trigonometric polynomials
	\begin{equation}\label{eq-tri-01}
		p(t)=\sum_{k=1}^nc_ke^{2\pi im_kt}, \quad c_k \in \mathbb{C},
	\end{equation}
	with distinct frequencies
	\begin{equation}\label{eq-fre-01}
		m_1<\cdots<m_n, \quad \quad m_k\in \mathbb{Z},\,\,1\le k\le n.
	\end{equation}
	We are motivated by the celebrated propagation of smallness result, called the  Tur\'{a}n lemma \cite{Tu}, which  states that for any subinterval  $E \subset [0, 1]$, 
	\begin{equation}\label{eq-Turan}
		\sup_{t\in[0,1]}|p(t)|\leq \left(\frac{4e}{|E|}\right)^{n-1}\sup_{t\in E}|p(t)|,
	\end{equation}
	where $|E|$ represents the Lebesgue measure of  $E$. A major improvement is due to  Nazarov \cite{N} where $E$ can be  arbitrary subsets of positive measure, with the constant $4e$ in \eqref{eq-Turan} replaced by 14.
	The noteworthy fact is that the constant depends only on the number of frequencies $n$, and not on the specific values of the frequencies $m_k$ ($1\le k\le n$).
	This strengthened  inequality plays a crucial role in Nazarov's proof of the sharp Amrein-Berthier uncertatinty principle 
	\begin{equation}\label{eq-AB-un}
		\|f\|_{L^{2}(\mathbb{R})}^{2} \leq c_1e^{c_2|E||F|}\left(\|f\|_{L^{2}(\mathbb{R}\setminus E)}^{2}+\|\hat f\|_{L^{2}(\mathbb{R}\setminus F)}^{2}\right),
	\end{equation}
	where $E, F$ are of finite Lebesgue measure and $c_1, c_2>0$ are absolute constants.
	
	The Tur\'{a}n--Nazarov inequality has  been extended in several directions. For example, Brudnyi \cite{Br} established a version for quasipolynomials in one or several variables.
	Friedland and Yomdin \cite{FY} introduced a geometric refinement by replacing the Lebesgue measure  of $E \subset [0, 1]$ with a more sophisticated geometric invariant
	$\omega_f(E)$, called the \emph{metric-span} of $E$, which depends on the function $f$.
	Moreover, Fontes-Merz \cite{FM} provided a multidimensional extension of the Tur\'{a}n--Nazarov inequality by induction.
	
	It is natural to ask whether the Tur\'{a}n–Nazarov inequality remains valid when $E$ is a fractal set.
	Our objectives in this paper are twofold:
	\begin{enumerate}
		\item We investigate the validity of Tur\'{a}n--Nazarov inequality when the Hausdorff dimension of $E$ in \eqref{eq-Turan}  is strictly less than 1.
		
		\item  We apply the resulting estimates to observability and unique continuation inequalities for Schr\"{o}dinger equations on such sets.
	\end{enumerate}
	
	To state the result, we	recall the following definition.
	\begin{definition}
		For $\alpha> 0$, the $\alpha$-Hausdorff content of a set $E\subset\R$ is defined as 
		\begin{align}\label{eq-content}
			C^{\alpha}_{\mathcal{H}}(E)=\inf \bigg\{\sum_{j} r_j^\alpha:\,E\subset \bigcup_{j} B(x_j, r_j)\bigg\}.
		\end{align}
		The Hausdorff dimension of $E$ is then given by
		$
		\dim_{\mathcal{H}}(E) = \inf \{ \alpha > 0 \mid C^{\alpha}_{\mathcal{H}}(E) = 0 \}.
		$
	\end{definition}
	
	In this work, we employ the $\alpha$-Hausdorff content defined in \eqref{eq-content} rather than the more conventional $\alpha$-Hausdorff measure. The latter is defined for $E \subset \mathbb{R}$ as
	$$
	\mathcal{M}^{\alpha}_{\mathcal{H}}(E) = \lim_{\delta \to 0} \inf \left\{ \sum_j \operatorname{diam}(A_j)^{\alpha}:\, E \subset \bigcup_j A_j, \, \operatorname{diam}(A_j) \leq \delta \right\}.
	$$
	While the $s$-Hausdorff content and measure may differ in value, both definitions yield identical Hausdorff dimensions (see e.g. in \cite[Lemma 4.6]{Ma}).
	

	\begin{theorem}\label{thm1}
		Let $\alpha\in (0,1)$ be given.
		
		$(i)$  There are no analogs of  \eqref{eq-Turan} of the form
		\begin{equation}\label{equ3.1}
			\sup_{t\in[0,1]}|p(t)|\leq C\sup_{t\in E}|p(t)|
		\end{equation}
		that holds for all trigonometric polynomials of degree $n$ and for all sets  $E\subset[0,1]$ of  $C^{\alpha}_{\mathcal{H}}(E)>0$, with a constant $C$ depending  only on $E$ and $n$.
		
		$(ii)$ For any  $E\subset[0,1]$ with  $C^{\alpha}_{\mathcal{H}}(E)>0$, and for every  trigonometric polynomial $p(t)$ of the form  \eqref{eq-tri-01}-\eqref{eq-fre-01}, there exists some absolute constant $C_0>0$  such that
		\begin{equation}\label{eq3.1}
			\sup_{t\in[0,1]}|p(t)|\leq \Bigg(\Big(\frac{C_0(n-1)}{C^{\alpha}_{\mathcal{H}}(E)}\Big)^{1/\alpha}(m_n-m_1)^{1/\alpha-1}\Bigg)^{n-1}\sup_{t\in E}|p(t)|.
		\end{equation}
		Moreover, the result is sharp in the following sense: one cannot replace the above constant by 
		$
		C_1((m_n-m_1)^{1/\alpha-1-\varepsilon})^{n-1}
		$
		with $\varepsilon>0$ and  a  constant $C_1$ depending  only on $E$ and $n$.
	\end{theorem}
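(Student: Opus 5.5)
Part (ii) will be reduced to Nazarov's inequality applied to a fattening of $E$, and parts (i) and the sharpness claim will come from a single family of counterexamples supported near a fractal set of frequency-scale spacing.

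\textbf{Part (ii).} Write $p(t)=e^{2\pi i m_1 t}q(t)$, where $q$ has frequencies in $[0,M]$ with $M=m_n-m_1$. Then $|q'|\le 2\pi M\sup_{[0,1]}|q|$ by Bernstein's inequality. Set $\|p\|=\sup_{[0,1]}|p|$ and fix a scale $\delta>0$ to be chosen. Consider the $\delta$-neighbourhood $E_\delta=\{t\in[0,1]:\operatorname{dist}(t,E)<\delta\}$. On $E_\delta$ we have
\[
|p(t)|\le \sup_E|p|+2\pi M\delta\,\|p\|.
\]
To bound $|E_\delta|$ from below, cover $E_\delta$ by $N\approx |E_\delta|/\delta$ intervals of length comparable to $\delta$. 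These also cover $E$, so the definition \eqref{eq-content} gives $C^{\alpha}_{\mathcal H}(E)\lesssim N\delta^\alpha$, that is,
\[
|E_\delta|\gtrsim C^{\alpha}_{\mathcal H}(E)\,\delta^{1-\alpha}.
\]
Nazarov's version of \eqref{eq-Turan} then yields
\[
\|p\|\le \Big(\frac{14}{c\,C^{\alpha}_{\mathcal H}(E)\delta^{1-\alpha}}\Big)^{n-1}\big(\sup_E|p|+2\pi M\delta\|p\|\big).
\]
We need to absorb the error term $A^{n-1}2\pi M\delta\|p\|$ into the left-hand side, where $A=14/(cC^\alpha_{\mathcal H}(E)\delta^{1-\alpha})$. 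Choosing $\delta$ so that $A^{n-1}M\delta\le 1/2$ requires $\delta^{1-(1-\alpha)(n-1)}\lesssim$ something. This is the step I expect to be the main obstacle, because the exponent can become negative for large $n$, so naive absorption fails.

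Instead I would use the Turán/Nazarov mechanism more carefully, via the Lagrange-interpolation proof. The proof of \eqref{eq-Turan} picks $n$ points of $E$ that are pairwise $\gtrsim |E|/n$ apart and bounds $q$ using exponential Lagrange interpolation with Vandermonde-type ratios. For $E$ fractal, I would choose $n$ points of $E$ that are pairwise separated by
\[
s\approx \big(C^\alpha_{\mathcal H}(E)/(n-1)\big)^{1/\alpha}.
\]
Such points exist: otherwise $E$ is covered by $n-1$ balls of radius $s$, giving $C^\alpha_{\mathcal H}(E)\le (n-1)s^\alpha$. The interpolation constant is then roughly $\prod_k (\text{diam}/\text{gap})$, with the gap controlled in units of $1/M$. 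Each factor becomes $(sM)^{-1}\cdot M=$ \ldots. Balancing the gap $s$ against the oscillation scale $1/M$ should produce the factor
\[
\big((n-1)/C^\alpha_{\mathcal H}(E)\big)^{1/\alpha}M^{1/\alpha-1}
\]
per degree. Concretely, I would rescale $t\mapsto Mt$ so that the frequencies lie in $[0,1]$ and the interval has length $M$. The $\alpha$-content of the rescaled $E$ in a unit-length window is $\gtrsim M^{\alpha-1}C^\alpha_{\mathcal H}(E)$ on average, by pigeonhole over the $M$ windows. On that window I would apply the separated-point interpolation, and then propagate to the whole interval using the fact that a band-limited function of bandwidth $1$ with $n$ terms is controlled on length-$M$ intervals by its values on unit windows (the classical Turán step on intervals, costing an absolute constant to the power $n-1$).

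\textbf{Part (i) and sharpness.} Take a Cantor-type set adapted to frequencies: for a large integer $M$, let $E_M=\{t:\operatorname{dist}(t,\tfrac1M\mathbb Z)\le \eta_M\}$ intersected with a fixed $\alpha$-dimensional Cantor set $K$. The content stays bounded below while $p(t)=1-e^{2\pi i Mt}$ (so $n=2$) satisfies $\sup_E|p|\lesssim M\eta_M$ and $\|p\|=2$.

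The right construction is $E=K\cap \tfrac1M$-grid neighbourhoods with $\eta_M\approx M^{-1/\alpha}$. This keeps $C^\alpha_{\mathcal H}(E)\gtrsim 1$, since there are $M$ intervals of length $M^{-1/\alpha}$. It gives a ratio $\gtrsim M^{1/\alpha-1}$, so no constant depending only on $E$ and $n$ works as $M\to\infty$. To get a single $E$ for part (i), I would take a lacunary union of such constructions over $M_j\to\infty$, arranged so that $C^\alpha_{\mathcal H}(E)>0$.

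For general $n$, use $p(t)=(1-e^{2\pi i Mt})^{n-1}$. Its frequencies are $0,M,\dots,(n-1)M$, so $m_n-m_1=(n-1)M$. It gives the ratio $\gtrsim (M^{1/\alpha-1})^{n-1}$, which beats $C_1(M^{1/\alpha-1-\varepsilon})^{n-1}$ for large $M$, establishing the sharpness statement.
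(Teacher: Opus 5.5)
\textbf{Part (ii) has a genuine gap.} You are right that the fattening argument cannot absorb the error $2\pi M\delta A^{n-1}$ once $(1-\alpha)(n-1)\ge 1$. Even when absorption works, for $n\ge 3$ it gives a worse power of $M$. The replacement you propose also fails, for two reasons.

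First, separation of the nodes alone gives no control for sparse frequencies. The function $1-e^{2\pi iKt}$ with $K\le M$ vanishes on all of $\frac1K\mathbb Z$, so pairwise separated nodes taken from such a grid carry no information about $p$. Interpolation at points of $E$ chosen independently of $p$ therefore cannot give a bound; in the Tur\'an--Nazarov mechanism the good point of $E$ must depend on $p$. Lagrange interpolation at $n$ nodes only handles consecutive frequencies, where $z^{-m_1}p$ is an algebraic polynomial of degree $n-1$ rather than $m_n-m_1$.

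Second, in the rescaled plan, the step ``propagate from a unit window to the interval of length $M$ at cost $C^{n-1}$'' is false. Tur\'an's lemma costs $(CM)^{n-1}$ there, and this is attained. The function $(1-e^{2\pi i\tau/M})^{n-1}$ has $n$ frequencies in $[0,1]$, is at most $(2\pi/M)^{n-1}$ on $[0,1]$, but equals $2^{n-1}$ at $\tau=M/2$. So this route loses a full factor $M^{n-1}$ against the target.

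The missing ingredient is a fractal version of Nazarov's sublevel-set lemma for the logarithmic derivative. The paper proves $C^{\alpha}_{\mathcal H}(\{z\in\mathbb T:|P'(z)/P(z)|>H\})\le C_0\,mH^{-\alpha}$ for every algebraic polynomial of degree $m$ (Lemma \ref{lemma-cartan}). The proof uses a Cartan-type ``normal point'' argument with the Besicovitch covering lemma, reducing to Nazarov's Lebesgue-measure bound. The paper then runs Nazarov's chain $p_{k-1}=\frac{1}{r^{(k)}}\frac{d}{dz}(z^{-r_k}p_k)$ (or with $r_1$ in place of $r_k$). This chain satisfies $\|p_{k-1}\|_W\ge\frac12\|p_k\|_W$ and $|p_{k-1}/p_k|(z)=|g'/g|(1/z)/r^{(k)}$, where $\deg g=r^{(k)}\le m_n-m_1$. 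The lemma with $H=r^{(k)}\lambda$ bounds the content of $\{|p_{k-1}/p_k|>\lambda\}$ by $C_0(r^{(k)})^{1-\alpha}\lambda^{-\alpha}$; this is exactly where $(m_n-m_1)^{1/\alpha-1}$ enters. Subadditivity of $C^{\alpha}_{\mathcal H}$ then gives a $p$-dependent point $z_0\in E$ with $\prod_{k}|p_{k-1}/p_k|(z_0)\le\lambda^{n-1}$. Hence $\sup|p|\le\|p\|_W\le 2^{n-1}|p_1(z_0)|\le(2\lambda)^{n-1}|p(z_0)|$.

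\textbf{Part (i) and sharpness.} Your test functions and grid neighbourhoods are the paper's. However, the single set $E$ is the substantive step, and it is not built; as described, it would fail. A \emph{union} of single-scale sets does not work, since $1-e^{2\pi iM_jt}$ is not small on the pieces built for other $M_i$. You need a nested intersection $E=\bigcap_j\{t:\operatorname{dist}(t,\frac{1}{M_j}\mathbb Z)\le\eta_j\}$.

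At the critical width $\eta_j\approx M_j^{-1/\alpha}$, this intersection has zero $\alpha$-content. Stage $j$ consists of about $M_j\prod_{i<j}M_i\eta_i$ intervals of length $\eta_j$, whose $\alpha$-sum is about $\prod_{i<j}M_i^{1-1/\alpha}\to0$. The paper instead uses widths $\delta N_j^{-1/\beta}$ with $\alpha<\tilde\alpha<\beta<1$ and doubly exponential $N_j=(2\delta)^{-c^{-j}}$. It proves $\dim_{\mathcal H}E=\tilde\alpha$ via the standard lower bound for Cantor-type sets. The resulting ratios are only $N_j^{1/\beta-1}$, which is why the sharpness statement carries an $\varepsilon$-loss.

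Your single-scale sets do have $\alpha$-content bounded below uniformly in $M$. But they only exclude constants depending on $C^{\alpha}_{\mathcal H}(E)$ and $n$, not constants depending on $E$ itself. Once the nested set is in place, your choice $(1-e^{2\pi iMt})^{n-1}$ is a nice addition: it gives the $\varepsilon$-sharpness for every $n$, whereas the paper only treats $n=2$.
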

	
	\begin{remark}\label{rmk1.1}
		$(i)$  To our best knowledge, Theorem  \ref{thm1} seems to be the first result which extends the classical  Tur\'{a}n--Nazarov inequality to the fractal setting. 
		By letting $\alpha\to1$ in \eqref{eq3.1}, we recover the classical Tur\'{a}n--Nazarov inequality \eqref{eq-Turan}, up to  some constant $C_0>0$.
		In contrast to \eqref{eq-Turan}, where the constant depends only on $|E|$ and the degree $n$,  our result reveals that in the fractal case the constant additionally depends on the frequency bandwidth $m_n-m_1$.
	
	$(ii)$    To demonstrate the failure of \eqref{equ3.1}, we consider the trigonometric polynomial 
	\begin{equation}\label{eq-counter01}
		p_{j}(t)=2i\sin(2\pi N_jt)=e^{2\pi itN_j}-e^{-2\pi itN_j},
	\end{equation}
	with a lacunary frequency $N_j=a^{b^j}$ for some $a, b>0$.  We then construct a set $E$ with $C^{\alpha}_{\mathcal{H}}(E)>0$ such that $\sup_{t\in E}|p_{j}(t)|\to 0$ as $j\to \infty$, thereby disproving \eqref{equ3.1}. 
	On the other hand, to prove \eqref{eq3.1}, a crucial step is to estimate the sublevel set concerning the logarithmic derivative of polynomials 
	$$
	E(P, y) := \left\{ z \in T : \left| \frac{d}{dz} \log P(z) \right| > y \right\}, \quad y > 0,
	$$
	a result that may be of independent interest, we refer to Lemma \ref{lemma-cartan} for detail.

\end{remark}



We will apply  the construction in Theorem \ref{thm1}  to derive negative  results for observability  and  unique continuation for Schr\"{o}dinger equations on fractal sets, see Theorems \ref{thm3} and \ref{thm4} below.

In order to better present our results and provide a comparison, we first consider the situation for the heat equations, which has been studied extensively. Consider 
\begin{align}\label{equ-heat-high-bd}
\left\{
\begin{array}{l}
	\partial_tu(t,x)-\Delta u(t,x)=0,\quad\, t>0,x\in \Omega, \\
	u(t,x)=0,\quad\quad \quad \quad \quad  t\geq 0,x\in \partial\Omega,\\
	u(0,\cdot)\in L^2(\Omega),
\end{array}
\right.
\end{align}
where   $\Omega\subset \mathbb{R}^d$ (with $d\geq 1$) is a bounded domain with a smooth boundary $\partial\Omega$.
The observability inequality states that for each \(T>0\), there exists a constant \(C_{\text{obs}} > 0\) such that
\begin{align}\label{equ-ob-intro}
\|u(T,\cdot)\|_{L^2(\Omega)}\leq C_{obs}\left(\int_0^T\int_E|u(t,x)|^2\d x\d t \right)^{1/2}.
\end{align}

When $E\subset\Omega$ is open,   the inequality   \eqref{equ-ob-intro} was derived, in general, by a global Carleman estimate
\cite{FurIma}, or the spectral inequality method \cite{Lebeau1995contr}.
When $E\subset\Omega$ is a subset of positive $d$-dim Lebesgue measure,
\eqref{equ-ob-intro} was proved in \cite{ApraizNullcontrol2013}.

Recently, Burq and Moyano  \cite{BM}  obtained a new observability inequality on a $W^{2,\infty}$ compact manifold $(\Omega, g)$ of dimension $d\ge 1$, allowing  $E\subset\Omega$  with zero Lebesgue measure.
Specifically, if $C^{d-1+\delta}_{\mathcal{H}}(E)>0$ for some $\delta\in(0,1)$ close to $1$,  then for each $T>0$, there is a constant $C_{obs}>0$,  such that any solution $u$ to \eqref{equ-heat-high-bd}, with the  operator $-\Delta$ replaced by 
a uniformly elliptic operator $-\mbox{div}(A(x)\nabla\cdot)$,
satisfies
\begin{equation}\label{eq-heat}
\|u(T,\cdot)\|_{L^2(\Omega)}\leq C_{obs}\int_0^T\sup_{x\in E}|u(t,x)| \d t.
\end{equation}
Further, when $\Omega\subset \mathbb{R}^d$ is bounded, open and connected, Green, etc. \cite{GLMO} proved that \eqref{eq-heat} holds for all sets $E$ satisfying $C^{d-1+\delta}_{\mathcal{H}}(E)>0$ with $\delta\in (0,1)$.  For related studies on both bounded domains and the whole space $\R^d$, we refer to \cite{HWW},  where the observation sets are measured by log-type Hausdorff contents, a condition that allows the inclusion of certain sets with Hausdorff dimension  $d-1$.

\begin{definition}
We say that $E\subset\Omega$ is an \emph{observable set} for the heat equation \eqref{equ-heat-high-bd} if inequality \eqref{equ-ob-intro} holds when $|E|>0$, or inequality \eqref{eq-heat} holds when $|E|=0$.
\end{definition}

Now we turn to the case of the Schr\"{o}dinger equations, where \emph{observable sets} can be defined analogously. Consider
\begin{equation}\label{equation1.2}
\begin{cases}
	i\partial_{t}u(t,x)+\Delta u(t,x)=0,\;t>0, x\in \Omega,\\
	u(0,x)=u_0\in L^2(\Omega).
\end{cases}
\end{equation}

When $\Omega$ is a general compact manifold, and $\Delta$ denotes the Laplace–Beltrami operator on $\Omega$, with Dirichlet or Neumann boundary condition when $\partial\Omega\ne 0$. 
It was shown in  \cite{Le92} (see also \cite{KDPhung})
that any open set with geometric control
condition (GCC) is \emph{an observable set}.
However, GCC is not  necessary on  the flat torus $\mathbb{T}^d := (\R/2\pi\mathbb{Z})^d$ (see, for instance,  \cite{Bour13,Ha,J90})  or on hyperbolic surfaces \cite{Jin} (see also \cite{Dya} for Anosov surfaces), where every non-empty open set is \emph{observable}.
In the context of rough localization,  it was verified in \cite{BZ17} that  on $\mathbb{T}^d$ for $d=1,2$, every subset $E$ of positive measure is \emph{an observable set}. Yet for $\Omega=\mathbb{T}^d$ with $d\ge3$, it remains unknown  whether   \eqref{equ-ob-intro} holds  for solutions of \eqref{equation1.2} whenever  $|E|>0$.

Compared to the heat equation, it is natural to ask whether the standard observability inequality 
\eqref{equ-ob-intro}  extends to the fractal setting   \eqref{eq-heat}. 
While the solution $e^{it\Delta}u_0$ remains in $L^2(\Omega)$ for any initial data $u_0\in L^2(\Omega)$, the quantity $\sup_{x \in E} |(e^{it\Delta}u_0)(t, x)|$ may be infinite when evaluated on a set $E$ of zero Lebesgue measure, regardless of the choice of $t\ge 0$. 
On the other hand, for initial data $u_0\in H^{k}(M)$ with $k>\frac{d}{2}$, the solution gains continuity through the standard Sobolev embedding theorem. Despite this regularity gain, we shall prove that for $\mathbb{T}$,  even for smooth initial data,
an observability inequality analogous to \eqref{eq-heat} fails to hold when the Hausdorff dimension of the observation set $E$ is strictly less than 1. 
More precisely,	we have

\begin{theorem}\label{thm3}
Let  $\Omega=\mathbb{T}$ in \eqref{equation1.2}. For every $\alpha\in (0,1)$, no observability inequality of the form  \eqref{eq-heat} holds for solutions  of \eqref{equation1.2} for  all subsets $E\subset\mathbb{T}$ with  $C^{\alpha}_{\mathcal{H}}(E)>0$, even if  $u_0(x)\in C^\infty(\mathbb{T})$.
\end{theorem}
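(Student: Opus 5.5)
We argue by contradiction, reusing the construction behind Theorem \ref{thm1}(i). Suppose that for some $\alpha\in(0,1)$ there is, for every $T>0$ and every $E\subset\mathbb{T}$ with $C^{\alpha}_{\mathcal{H}}(E)>0$, a constant $C_{obs}=C_{obs}(T,E)$ such that \eqref{eq-heat} holds for all solutions of \eqref{equation1.2} with smooth data. Fix $T>0$ (we may take $T\le 1$) and let $E$ be the set from Remark \ref{rmk1.1}(ii). Thus $C^{\alpha}_{\mathcal{H}}(E)>0$, and with lacunary frequencies $N_j=a^{b^j}$ we have
$$\delta_j:=\sup_{x\in E}|\sin(N_jx)|\to0 \quad (j\to\infty)$$
(after rescaling $[0,1]$ to $\mathbb{T}=\R/2\pi\mathbb{Z}$). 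As test data we take $u_{0,j}(x)=\sin(N_jx)\in C^\infty(\mathbb{T})$. Since $\sin(N_j x)$ is an eigenfunction of $\Delta$ with eigenvalue $-N_j^2$, the solution is explicit:
$$u_j(t,x)=e^{-iN_j^2t}\sin(N_jx).$$

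The key observation is that this solution has a spatial profile that does not change in time, only its phase rotates. Therefore
$$\sup_{x\in E}|u_j(t,x)|=\delta_j \quad\text{for every } t,$$
while $\|u_j(T,\cdot)\|_{L^2(\mathbb{T})}=\sqrt{\pi}$ for all $j$. Plugging these into \eqref{eq-heat} gives
$$\sqrt{\pi}\le C_{obs}\,T\,\delta_j .$$
Letting $j\to\infty$ yields a contradiction, because $C_{obs}$ depends only on $T$ and $E$, not on $j$. The same argument works with the exponentials $e^{\pm iN_jx}$ combined as in \eqref{eq-counter01}: each piece picks up the same phase factor $e^{-iN_j^2 t}$, so the combination remains a stationary profile times a phase.

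The main work is therefore the construction of $E$, which is exactly the step already required for Theorem \ref{thm1}(i). One takes a Cantor-type set
$$E=\bigcap_j E_j,\qquad E_j=\bigcup_k \bigl[k\pi/N_j-\eta_j,\;k\pi/N_j+\eta_j\bigr],$$
a union of small neighbourhoods of the zeros of $\sin(N_j x)$, with $\eta_j N_j\to0$ so that $\delta_j\lesssim N_j\eta_j\to0$. Lacunarity ($b$ large) guarantees that each interval of $E_{j-1}$ contains about $\eta_{j-1}N_j$ intervals of $E_j$. Choosing $\eta_j=N_j^{-1/\alpha'}$ with some $\alpha<\alpha'<1$ ensures $\eta_jN_j\to0$. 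A mass-distribution (Frostman) argument then gives $C^{\alpha}_{\mathcal{H}}(E)>0$: with uniform mass splitting, the mass of an interval of length $\eta_j$ is about $\prod_{i\le j}(\eta_{i-1}N_i)^{-1}$, and this is bounded by $\eta_j^{\alpha}$ using the rapid growth of $N_j$.

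If the paper's Theorem \ref{thm1}(i) is taken as given with this $E$, the Schr\"odinger statement follows immediately from the eigenfunction reduction above. The only remaining checks are that all the $u_{0,j}$ are smooth, and that the observation functional in \eqref{eq-heat} is finite for them, which holds by continuity.
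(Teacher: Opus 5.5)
Your proposal is correct and follows essentially the same route as the paper. Both use the eigenfunction data $u_{0,j}=\sin(N_jx)$, whose solution is $e^{-iN_j^2t}\sin(N_jx)$, tested against a lacunary Cantor set clustered at the zeros of $\sin(N_jx)$. This gives $\int_0^T\sup_E|u_j|\,dt=T\sup_E|\sin(N_jx)|\to0$ while $\|u_{0,j}\|_{L^2}=\sqrt{\pi}$.

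The differences are cosmetic. The paper uses one-sided intervals $(\pi m/N_j,\pi m/N_j+\delta N_j^{-1/\beta})$ with $N_j=(\pi/\delta)^{c^{-j}}$, and gets $\dim_{\mathcal{H}}E=\tilde\alpha>\alpha$ by citing Falconer's Cantor-set dimension bound. You instead use symmetric neighbourhoods of radius $N_j^{-1/\alpha'}$ and a direct mass-distribution argument. As in Falconer's proof, that argument must be checked on all intervals, not only on the construction intervals.
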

Applying methods in Theorem \ref{thm1},
we can also establish negative results concerning unique continuation from fractal sets for  $\Omega=\R$ in \eqref{equation1.2}. To motivate the result, we recall
the following unique continuation inequality at two different time instances proved in  \cite{WWZ}: 
let $E_1=B_{r_1}(x_1)^c$, $E_2=B_{r_2}(x_2)^c$ be the complements of balls centered at $x_1, x_2\in \R^d$ with radii $r_1, r_2>0$ respectively. Then for any  $u_0\in L^2(\R^d)$, the solutions of the equation \eqref{equation1.2} satisfy
\begin{equation}\label{eq-two-point}
\int_{\mathbb{R}^d} |u_0(x)|^2\,dx \leq C e^{ \frac{Cr_1 r_2}{T-S}} \left(\int_{E_1} |u(S,x)|^2 \,dx + \int_{E_2} |u(T,x)|^2 \,dx\right),
\end{equation}
where $T>S\ge 0$ and $C>0$. 
To state our result, we require the following two types of sets, both of which describe a certain ``thick" property of a set.

\begin{definition}\label{def-set}
Let  $E \subset \mathbb{R}$.

$(i)$ For $\epsilon\in (0, 1]$, we say that $E$ is $\epsilon$ -thin (with respect to $\rho$) if
\begin{equation}\label{eq-def-rho-1024}
	|E \cap B(x, \rho(x))| \leq \epsilon \rho(x),\qquad \forall\, x\in \mathbb{R},
\end{equation}
where 
\begin{equation}\label{eq-rho}
	\rho(x) = \min\left(1, |x|^{-1}\right).
\end{equation}

$(ii)$  For $\alpha\in (0, 1]$, we   say that  $E$  is $\alpha$-thick if there are $L>0$ and $\gamma>0$ such that
\begin{equation}\label{thick-set}
	C^{\alpha}_{\mathcal{H}}(E_i \cap [x,x+L])>\gamma L,\qquad  \forall\, x\in \mathbb{R}.
\end{equation}
\end{definition}

The $\epsilon$ -thin set was introduced in \cite{SVW}, motivated by  problems arising in Anderson-Bernoulli models. The notion of $\alpha$-thick set was employed in \cite{HWW} to study observable sets for heat equations on fractal sets. When $\alpha=1$, it coincides with the usual thick sets, which is used to characterize observable sets for heat equation in $\R^d$, $d\ge 1$, see e.g. in \cite{wang2019}.

\begin{theorem}\label{thm4}
Let $\Omega=\R$ in \eqref{equation1.2} and  let  $0\le S<T$.  Assume that $u_0\in \bigcap_{k=0}^{\infty}H^{k}(\R)$.

$(i)$   There exist $\epsilon > 0$ and $C>0$,  such that if $E_1^c$ and $E_2^c$ are $\epsilon$-thin sets  with respect to $\rho$ given by \eqref{eq-rho}, then the solution $u(t,x)$ of \eqref{equation1.2} satisfies the unique continuation inequality
\begin{equation}\label{equ4.2}
	\|u_0\|_{L^2(\R)}^2\leq C\,\bigg(\big\|\big\{\sup_{E_1\cap[k,k+1]} |u(S,x)|\big\}_{k\in\mathbb{Z}}\big\|_{l^2(\mathbb{Z})}^2+\big\|\big\{\sup_{E_2\cap[k,k+1]} |u(T,x)|\big\}_{k\in\mathbb{Z}}\big\|_{l^2(\mathbb{Z})}^2\bigg).
\end{equation}

$(ii)$ For every $\alpha\in (0,1)$, there exist  $\alpha$-thick subsets $E_1, E_2\subset \mathbb{R}$ such that the inequality \eqref{equ4.2} does not hold.
\end{theorem}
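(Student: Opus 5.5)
The plan is to derive part $(i)$ from an Amrein--Berthier type uncertainty principle for thin sets, after converting the pair of times $(S,T)$ into a pair (function, Fourier transform). Part $(ii)$ uses the lacunary sine construction behind Theorem \ref{thm1}$(i)$, made periodic and cut off by a very wide envelope.

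\emph{Part $(i)$.} Set $\tau=T-S>0$ and $g=u(S,\cdot)$, so that $\|g\|_{L^2}=\|u_0\|_{L^2}$ by unitarity. The explicit kernel of $e^{i\tau\partial_x^2}$ gives
$$u(T,x)=c\,\tau^{-1/2}e^{ix^2/4\tau}\,\widehat{h}\Big(\frac{x}{2\tau}\Big),\qquad h(y)=e^{iy^2/4\tau}g(y),$$
where $|h|=|g|$. Therefore $\|u(T)\|_{L^2(E_2)}^2=c'\|\widehat h\|_{L^2(E_2/2\tau)}^2$ and $\|u(S)\|_{L^2(E_1)}=\|h\|_{L^2(E_1)}$. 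I would then invoke the Shubin--Vakilian--Wolff uncertainty principle from \cite{SVW}: there is $\epsilon_0>0$ such that if $A^c$ and $B^c$ are $\epsilon_0$-thin with respect to $\rho$, then
$$\|h\|_2^2\le C\big(\|h\|_{L^2(A)}^2+\|\widehat h\|_{L^2(B)}^2\big).$$
The dilation $x\mapsto x/2\tau$ does not preserve $\rho$. However, $\rho(x/\lambda)$ and $\rho(x)$ are comparable with constants depending on $\lambda$. Hence if $E_2^c$ is $\epsilon$-thin, then $(E_2/2\tau)^c$ is $C_\tau\epsilon$-thin, after covering $B(x,\rho(x/\lambda))$ by boundedly many balls of radius $\rho$. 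So it suffices to choose $\epsilon$ small depending on $\tau$. Finally, since each $[k,k+1]$ has length $1$,
$$\int_{E_1}|u(S)|^2\le \sum_k \sup_{E_1\cap[k,k+1]}|u(S)|^2,$$
and the same bound holds for $E_2$. The suprema make sense because $u_0\in H^\infty$, so $u$ is continuous. This yields \eqref{equ4.2}.

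\emph{Part $(ii)$.} Let $K\subset[0,1]$ be the set built in the proof of Theorem \ref{thm1}$(i)$. It satisfies $C^\alpha_{\mathcal H}(K)>0$ and $\delta_j:=\sup_K|\sin(2\pi N_jx)|\to0$. Take $E_1=E_2=E=\bigcup_{k\in\mathbb Z}(K+k)$. Then $E$ is $\alpha$-thick with $L=1$ (after halving $\gamma$ to handle windows straddling two periods). Since $\sin(2\pi N_j x)$ is $1$-periodic, it is $\le\delta_j$ on all of $E$. For initial data I take
$$u_0=\big(e^{2\pi iN_jx}-e^{-2\pi iN_jx}\big)\varphi(x/R_j),$$
with $\varphi$ a Gaussian, so $u_0\in\bigcap_k H^k$, and with $R_j\gg N_j$ to be chosen. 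Each modulated packet evolves approximately by phase plus translation:
$$e^{it\partial_x^2}\big(e^{2\pi iNx}\varphi(\cdot/R)\big)=e^{-4\pi^2iN^2t}e^{2\pi iNx}\,\varphi\big((x-4\pi Nt)/R\big)+\mathrm{err}.$$
Because the Gaussian stays Gaussian, the error can be made explicit and bounded pointwise by $O(t/R^2)$ times a slightly spread Gaussian envelope. Replacing $\varphi((x\mp4\pi Nt)/R)$ by $\varphi(x/R)$ costs $O(NT/R)$ times a comparable envelope, by the mean value theorem. Hence for $t\in\{S,T\}$,
$$|u(t,x)|\le 2\delta_j|\varphi(x/R_j)|+O\big(N_jT/R_j+T/R_j^2\big)\,\Phi(x/R_j)\qquad (x\in E),$$
with $\Phi$ a fixed rapidly decaying envelope.

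Taking suprema over $[k,k+1]$ and summing squares, the right side of \eqref{equ4.2} is $\lesssim(\delta_j^2+N_j^2T^2/R_j^2)R_j$. On the other hand, $\|u_0\|_2^2\sim R_j$, since the two modulations are nearly orthogonal for $R_j\gg1/N_j$. Choosing $R_j=N_j^2$ makes the ratio tend to $0$, which contradicts \eqref{equ4.2}.

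The main obstacle is the uniform pointwise control of the dispersive error relative to a summable envelope, so that the $\ell^2$ sum of local suprema is really $o(R_j)$ and not merely small in $L^2$. The cleanest route is the exact Gaussian formula for $e^{it\partial_x^2}$ applied to modulated Gaussians: the evolved envelope $(1+4it/R^2)^{-1/2}\exp\!\big(-(x-4\pi Nt)^2/(R^2+4it)\big)$ can be compared directly with $\varphi(x/R)$.
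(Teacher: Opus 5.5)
Your argument is correct, but it takes a different route from the paper in both parts, and the difference in $(ii)$ is substantial.

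\textbf{Part $(i)$.} The paper invokes Kovrizhkin's extension of the Shubin--Vakilian--Wolff principle. That extension allows different scales $\rho$ and $\tilde\rho(x)=\min(r,r^2|x|^{-1})$ for the two sets, and the paper observes that $E_2^c/2T$ is $\epsilon$-thin for $\tilde\rho$ with the \emph{same} $\epsilon$. You keep the original SVW theorem and pay for the dilation with a $\tau$-dependent loss in $\epsilon$, using the comparability of $\rho(x)$ and $\rho(x/\lambda)$ and a bounded covering. This is more elementary and equally sufficient, since $\epsilon$ may depend on $T-S$ in either version.

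\textbf{Part $(ii)$, the paper's route.} The paper normalizes to $S=0$, $T=1/2$ and takes data $e^{-ix^2/2}f_{N_j}$, so that $|u(1/2,\cdot)|=|\widehat{f_{N_j}}|$ holds exactly. The work goes into designing unit-width $f_n$ whose transform has the same sine-times-Gaussian-pair shape. This needs two different fractal sets (built on the lattices $\pi\mathbb{Z}/N_j$ and $\mathbb{Z}/N_j$) and a rescaling for general $T$, but it produces no error terms at all.

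\textbf{Part $(ii)$, your route.} You use a single periodic set $E_1=E_2$ and the fact that $\sin(2\pi Nx)$ times a very wide envelope is an approximate stationary state. Both frequencies $\pm 2\pi N$ carry the same phase $e^{-4\pi^2 iN^2t}$, and over the fixed time $T$ the packets drift only $O(N_jT)\ll R_j$. The price is unnormalized data with $\|u_0\|_2^2\sim R_j$. You then need pointwise envelope bounds so that the $\ell^2$ sum of local suprema of the error is $o(R_j)$, not merely small in $L^2$. The exact Gaussian propagator formula you cite does supply these bounds. A bonus of your version is that the same pair of sets defeats \eqref{equ4.2} for every $0\le S<T$ at once, whereas the paper's sets are tuned to one value of $T-S$.
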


%
%
\begin{remark}\label{rmk1.1}
	$(i)$ Unlike the corresponding results for the heat equation  \cite{BM,GLMO,HWW}, Theorem \ref{thm3} reveals a fundamental difference  in the observability for  Schr\"{o}dinger  equations  on fractal sets. This distinction arises from the following key observations: for Schr\"{o}dinger equations,  the failure of the observability inequality \eqref{eq-heat} stems from the same constructive approach  used to disprove \eqref{equ3.1} on fractal sets, where the constant also depends on the bandwidth $m_n-m_1$.
	In contrast, for heat equations, consider the spectral projector operator
	\begin{equation*}
		\Pi_{\Lambda} u = \sum_{\lambda_k \leq \Lambda} \langle u, \varphi_k \rangle_{L^2} \varphi_k, \quad \forall u \in L^2(\Omega).
	\end{equation*}
	Here, $\Omega\subset \R^d$ is a bounded Lipschitz domain and $\{\varphi_k\}$ are Laplace eigenfunctions associated to the
	eigenvalues $\{\lambda_k\}$.
	The following spectral inequality holds (see e.g. in \cite{BM,GLMO,HWW}):
	\begin{equation}\label{eq-spect-01}
		\| \Pi_{\Lambda} u \|_{L^2(\Omega)} \leq Ce^{C\sqrt{\Lambda}} \sup_{x \in E} | (\Pi_{\Lambda} u)(x)|, \quad \forall u \in L^2(\Omega),
	\end{equation}
	provided $E\subset\Omega $ satisfies $C_\mathcal{H}^{d-1+\delta}(E) > 0$. Notably, the constant $e^{C\sqrt{\Lambda}}$ remains identical to the case $|E|>0$. Combining  \eqref{eq-spect-01}  with the classical Lebeau-Robbiano strategy yields the observability estimate \eqref{eq-heat}.

	$(ii)$  For Schr\"{o}dinger equations on $\R$, 
	Theorem \ref{thm4} shows that \eqref{equ4.2} does not extend  to the fractal setting. To our best knowledge, a complete characterization of the sets $E_1, E_2$ for which \eqref{equ4.2} remains valid is still unknown. This question is closely related  to characterizing $E_1, E_2$ in  \eqref{eq-two-point}, and resolving it would likely constitute a significant result in the field.
	
	$(iii)$   The proofs of Theorem \ref{thm3} and \ref{thm4} rely heavily on the construction from part $(i)$ of Theorem \ref{thm1}, along with a careful choice of initial data. In the case of Theorem \ref{thm4}, we consider
	$$
	f_j(x)=\sin{(N_jx)}\left(e^{-(x+\pi N_j)^2/2}-e^{-(x-\pi N_j)^2/2}\right), \qquad j\in\mathbb{N},
	$$
	with parameters $N_j$ taken from \eqref{eq-counter01}. These functions satisfy  $\lim_{j\to\infty}\|f_j\|_{L^2(\R)}^2=\sqrt{\pi}$, and  both $f_j$ and their Fourier transforms $\hat{f}_j$ decay rapidly on the set $E$.

\end{remark}

The rest of the paper is organized as follows:
In Section \ref{section2}, we prove Theorem \ref{thm1}. 
Section \ref{section3}  presents the proofs of Theorems \ref{thm3} and \ref{thm4}.

\section{Proof of Theorem \ref{thm1}}\label{section2}

In subsection \ref{section-2-1}, for  any given  $\alpha\in (0, 1)$, we construct  a set $E\subset [0,1]$ with $C^{\alpha}_{\mathcal{H}}(E)>0$
for which the inequality \eqref{equ3.1} fails. 
In subsection \ref{section-2-2}, we prove  \eqref{eq3.1}.

\subsection{The proof of  statement $(i)$ }\label{section-2-1}

We divide the proof into three steps.

\emph{Step 1. The construction of $E$.}
Choose $\tilde{\alpha}\in (\alpha, 1)$ and $c>0$ such that
\begin{align}\label{equ2.1.7}
	0<c<\frac{1}{1+\tilde{\alpha}^{-\frac{1}{2}}},\qquad \frac{1}{c}\in\mathbb{N}.
\end{align}
Fix $0<\delta<\frac{1}{6}$ such that $\frac{1}{2\delta}\in \mathbb{N}$ and set
\begin{equation}\label{equ2.11}
	N_k=(\frac{1}{2\delta})^{s_k},\;\mbox{ where }s_k=(\frac{1}{c})^{k}.
\end{equation}
Clearly, we have $N_k\in\mathbb{N}$,  $N_k<N_{k+1}$ and $\lim_{k\to\infty}N_k=\infty$.
We define 
\begin{equation}\label{equ2.3}
	J_{m,\delta}(j)=[\frac{1}{2N_j}m,\frac{1}{2N_j}m+\frac{\delta}{N_j^{1/\beta}}]\cap [0,1],
\end{equation}
where 
\begin{align}\label{equ2.1.8}
	\beta=c+(1-c)\tilde{\alpha}.
\end{align}
The above definition of $\beta$ yields that $\tilde{\alpha}<\beta<1$.    Now we define
\begin{equation}\label{equ2.4}
	E= {\bigcap\limits_{j=1}^\infty} {\bigcup\limits_{m=0}^{2N_j-1}}J_{m,\delta}(j)=\bigcap\limits_{k=0}^\infty E_k,
\end{equation}
where 
\begin{equation}\label{equ2.5}
	E_0=[0,1],\,\,\,E_k={\bigcap\limits_{j=1}^k} {\bigcup\limits_{m=0}^{2N_j-1}}J_{m,\delta}(j),\,\,\,k=1,2,\cdots.
\end{equation}
The intervals obtained at each step are illustrated in the following figure:

\begin{figure}[htbp]
	\centering
	\begin{tikzpicture}[scale=1]
		
		\def\width{9}
		\def\height{0.3}
		\def\gap{0.4}
		
		\filldraw[black] (0, 0) rectangle ++(\width, -\height);
		\node[anchor=west, font=\small] at (\width+0.3, -\height/2) {$E_0$};
		
		\pgfmathsetmacro{\yone}{-1*(\height + \gap)}
		\pgfmathsetmacro{\xA}{0.0}
		\pgfmathsetmacro{\xB}{3.4}
		\pgfmathsetmacro{\xC}{7.4}
		\pgfmathsetmacro{\wone}{1.3}
		\filldraw[black] (\xA, \yone) rectangle ++(\wone, -\height);
		\filldraw[black] (\xB, \yone) rectangle ++(\wone, -\height);
		\filldraw[black] (\xC, \yone) rectangle ++(\wone, -\height);
		\node at (6.2, \yone-0.15) {$\cdots\cdots$};
		\node[anchor=west, font=\small] at (\width+0.3, \yone-\height/2) {$E_1$};
		
		\pgfmathsetmacro{\ytwo}{-2*(\height + \gap)}
		\filldraw[black] (0.02, \ytwo) rectangle ++(0.2, -\height);
		\filldraw[black] (0.5, \ytwo) rectangle ++(0.2, -\height);
		\node[inner sep=0pt, scale=0.5] at (0.9, \ytwo-0.15) {$\cdots$};
		\filldraw[black] (1.05, \ytwo) rectangle ++(0.2, -\height);
		\filldraw[black] (3.45, \ytwo) rectangle ++(0.2, -\height);
		\filldraw[black] (3.93, \ytwo) rectangle ++(0.2, -\height);
		\node[inner sep=0pt, scale=0.5] at (4.31, \ytwo-0.15) {$\cdots$};
		\filldraw[black] (4.46, \ytwo) rectangle ++(0.2, -\height);
		\filldraw[black] (7.4, \ytwo) rectangle ++(0.2, -\height);
		\filldraw[black] (7.88, \ytwo) rectangle ++(0.2, -\height);
		\node[inner sep=0pt, scale=0.5] at (8.33, \ytwo-0.15) {$\cdots$};
		\filldraw[black] (8.48, \ytwo) rectangle ++(0.2, -\height);
		\node[anchor=west, font=\small] at (\width+0.3, \ytwo-\height/2) {$E_2$};
		
		\node at (6.2, \ytwo-0.15) {$\cdots\cdots$};
		
		\pgfmathsetmacro{\ythree}{-3*(\height + \gap)}
		\foreach \x in {0.03,0.1,0.18,0.51,0.58,0.66,1.06,1.13,1.21,3.46,3.53,3.61,3.94,4.01,4.09,4.47,4.53,4.61,7.41,7.48,7.56,7.89,7.96,8.04,8.49,8.56,8.64} {
			\filldraw[black] (\x, \ythree) rectangle ++(0.01, -\height);
		}
		\node at (6.2, \ythree-0.15) {$\cdots\cdots$};
		\node[anchor=west, font=\small] at (\width+0.3, \ythree-\height/2) {$E_3$};
		\node[inner sep=0pt, scale=0.5] at (0.9, \ythree-0.15) {$\cdots$};
		\node[inner sep=0pt, scale=0.5] at (4.31, \ythree-0.15) {$\cdots$};
		\node[inner sep=0pt, scale=0.5] at (8.33, \ythree-0.15) {$\cdots$};
		
	\end{tikzpicture}
	\label{fig:energy_level}
\end{figure}
Furthermore, suppose that each interval of $E_{k-1}$ contains at least $m_k$ intervals of $E_k\,(k=1,2,\cdots)$ which are separated by gaps of at least $\varepsilon_k$, where $\varepsilon_k>0$ for each $k$. Therefore, it follows from the definition of $E$ that
\begin{equation}\label{equ2.8}
	m_1=2N_1,\,\,\,m_k=\lfloor \frac{\frac{\delta}{N_{k-1}^{1/\beta}}}{\frac{1}{2N_k}} \rfloor=\lfloor 2\delta N_kN_{k-1}^{-1/\beta}\rfloor,\;\,\,\,k\geq 2,
\end{equation}
and 
\begin{equation}\label{equ2.9}
	\varepsilon_k=\frac{1}{2N_k}-\frac{\delta}{N_k^{1/\beta}},\;\,\,\,k\geq 1.
\end{equation}
Here, we denote by $\lfloor x\rfloor$ the greatest integer not exceeding $x$.
Before proceeding, we observe that $m_k$ and $\varepsilon_k$ satisfy 
\begin{equation}\label{eq-m-k-01}
	m_k\geq 2,\qquad k\geq 1, 
\end{equation}
and
\begin{equation}\label{eq-epsi-k-02}
	0<\varepsilon_{k+1}<\varepsilon_k,\qquad k\geq 1.
\end{equation}	

In fact, we have $m_1=2N_1\geq 2$. For $k\ge 2$, we have
\begin{align}\label{equ2.1}
	2\delta N_kN_{k-1}^{-1/\beta}=(\frac{1}{2\delta})^{(\frac{1}{c})^{k-1}(\frac{1}{c}-\frac{1}{\beta})-1}\ge(\frac{1}{2\delta})^{\frac{1}{c}(\frac{1}{c}-\frac{1}{\beta})-1}\ge \frac{1}{2\delta}\ge 3,\quad k\ge 2,
\end{align}
where the first equality follows from (\ref{equ2.11}), the second inequality uses the fact  $c<\frac{1}{2}$ (derived from  $\tilde{\alpha}<1$ and  \eqref{equ2.1.7}) and the fact $\frac{1}{c}-\frac{1}{\beta}>1$ (derived from  \eqref{equ2.1.7} and \eqref{equ2.1.8}). The last inequality follows from our choice $\delta<\frac{1}{6}$.  
Combining \eqref{equ2.1} with the fact $m_k\geq 2\delta N_kN_{k-1}^{-1/\beta}-1$( by (\ref{equ2.8})) yields \eqref{eq-m-k-01}.

To prove \eqref{eq-epsi-k-02},  
it suffices to show that $\varepsilon_{k+1} < \varepsilon_k$ for all $k$. Indeed, we have
\begin{align*}
	\varepsilon_k-\varepsilon_{k+1}&>\frac{1}{2N_k}-\frac{\delta}{N_k}-\frac{1}{2N_{k+1}}\\
	&\ge (2\delta)^{(\frac{1}{c})^{k}}(\frac{1}{2}-\delta-\frac{1}{2}(2\delta)^{\frac{1}{c}(\frac{1}{c}-1)})\\
	&>0,
\end{align*}
where  
the first inequality follows from \eqref{equ2.9} and the facts $\beta<1$ and $\delta>0$, while the third inequality follows from the bounds $\delta<\frac{1}{6}$ and $c<\frac{1}{2}$. 

\emph{Step 2. We prove that $C^{\alpha}_{\mathcal{H}}(E)>0$.}
Indeed, since $0 < \alpha < \tilde{\alpha}$, it is enough to prove $\dim_{\mathcal{H}} E = \tilde{\alpha}$.  
By \eqref{eq-m-k-01} and \eqref{eq-epsi-k-02}, we have the lower bound (see \cite[Example 4.6]{F})	
\begin{equation}\label{equ2.1.3}
	\dim_{\mathcal{H}}E\ge \liminf_{k\to\infty}\frac{\log(m_1\cdots m_{k-1})}{-\log(m_k\varepsilon_k)}.
\end{equation}

We first establish the following two facts.

\emph{Fact one:} Denote by $A_k:=m_1\cdots m_{k-1}$, then
\begin{equation}\label{equ-lowerbound}
	\liminf_{k\to\infty}\frac{\log A_k}{-\log(m_k\varepsilon_k)}=\liminf_{k\to\infty}\frac{\log A_k}{-\log(\delta N_{k-1}^{-1/\beta})}.
\end{equation}
Indeed, set $\varphi_k=\frac{m_k\varepsilon_k}{\delta N_{k-1}^{-1/\beta}}$, by \eqref{equ2.8}, \eqref{equ2.9}, we have
\begin{equation*}
	\varphi_k=\frac{\lfloor 2\delta N_kN_{k-1}^{-1/\beta}\rfloor}{2\delta N_kN_{k-1}^{-1/\beta}}-\frac{1}{N_k^{1/\beta}N_{k-1}^{-1/\beta}}.
\end{equation*}
By \eqref{equ2.1}, we have $N_kN_{k-1}^{-1/\beta}\to\infty$ as $k\to\infty$. Moreover, it follows from  \eqref{equ2.11} and the fact $c<\frac{1}{2}$ that $N_kN_{k-1}^{-1}=(\frac{1}{2\delta})^{(\frac{1}{c})^{k-1}(\frac{1}{c}-1)}\to\infty$. Thus, we have
$\lim_{k\to\infty}\varphi_k=1,$
which implies that
\begin{align*}
	\liminf_{k\to\infty}\frac{\log A_k}{-\log(m_k\varepsilon_k)}=\liminf_{k\to\infty}\frac{\log A_k}{-\log(\delta N_{k-1}^{-1/\beta}\varphi_k)}=\liminf_{k\to\infty}\frac{\log A_k}{-\log(\delta N_{k-1}^{-1/\beta})}.
\end{align*}
This proves  \eqref{equ-lowerbound}.

\emph{Fact two:} There exist positive constants $c_1$ and $c_2$, independent of $k$, such that
\begin{equation}\label{eq-fact-two}
	c_1C_k\le A_k\le c_2C_k,\qquad k\ge 1,  
\end{equation}
where 
\begin{equation}\label{equ2.1.4}
	C_k:=(\frac{1}{2\delta})^{(\frac{1}{c})^{k-1}-(\frac{1}{\beta}-1)\frac{(\frac{1}{c})^{k-2}-1}{1-c}-(k-2)}.
\end{equation}
Indeed, by \eqref{equ2.11} and \eqref{equ2.8}, we have
\begin{align*}
	A_k\leq 2N_1\cdot(2\delta N_2N_{1}^{-1/\beta})\cdots(2\delta N_{k-1}N_{k-2}^{-1/\beta})
	=2(\frac{1}{2\delta})^{(\frac{1}{c})^{k-1}-(\frac{1}{\beta}-1)\frac{(\frac{1}{c})^{k-2}-1}{1-c}-(k-2)}=2C_k.
\end{align*}
On the other hand, we have
\begin{align*}
	\frac{A_k}{C_k}\geq \frac{2N_1\cdot(2\delta N_2N_{1}^{-1/\beta}-1)\cdots(2\delta N_{k-1}N_{k-2}^{-1/\beta}-1)}{N_1\cdot(2\delta N_2N_{1}^{-1/\beta})\cdots(2\delta N_{k-1}N_{k-2}^{-1/\beta})}=2\prod_{j=1}^{k-2}(1-\frac{1}{2\delta N_{j+1}N_{j}^{-1/\beta}}).
\end{align*}
Denote by
\begin{equation}\label{equ2.12}
	a_j:=\frac{1}{2\delta N_{j+1}N_{j}^{-1/\beta}}=(\frac{1}{2\delta})^{-(\frac{1}{c})^{j}(\frac{1}{c}-\frac{1}{\beta})+1},\quad  j\in\mathbb{N}.
\end{equation}		
It follows from \eqref{equ2.1.7}, \eqref{equ2.1.8} and $0<\tilde{\alpha}<1$ that $\frac{1}{c}-\frac{1}{\beta}>1$ and $\frac{1}{c}>2$.
This implies 
\begin{equation*}
	a_j<(2\delta)^{2^j-1}<(\frac{1}{3})^{2^j-1}<1,
\end{equation*}
which, in turn, indicates that
$
\sum_{j=1}^{\infty}a_j<\infty.
$
Thus, there exists an absolute constant $c_1>0$ such that $A_k\ge c_1C_k$ holds. This proves \eqref{eq-fact-two}.

The above two facts yield
\begin{align*}
	\dim_{\mathcal{H}}E&\ge\liminf_{k\to\infty}\frac{\log(c_1 C_k)}{-\log(\delta N_{k-1}^{-1/\beta})}\\
	&=\liminf_{k\to\infty}\frac{\log c_1+\bigg({(\frac{1}{c})^{k-1}-(\frac{1}{\beta}-1)\frac{(\frac{1}{c})^{k-2}-1}{1-c}-(k-2)}\bigg)\log{(\frac{1}{2\delta})}}{\log 2+\bigg(\frac{1}{\beta}(\frac{1}{c})^{k-1}+1\bigg)\log(\frac{1}{2\delta})}\\
	&=\liminf_{k\to\infty}\frac{(\frac{1}{c})^{k-1}-(\frac{1}{\beta}-1)\frac{(\frac{1}{c})^{k-2}-1}{1-c}-(k-2)}{\frac{1}{\beta}(\frac{1}{c})^{k-1}}\\
	&=\beta\bigg(1-\frac{\frac{1}{\beta}-1}{1-c}\cdot c\bigg)
	=\tilde{\alpha},
\end{align*}
where the first inequality follows from \eqref{equ2.1.3}, \eqref{equ-lowerbound} and \eqref{eq-fact-two}; the first equality is derived from \eqref{equ2.11} and \eqref{equ2.1.4}; while the last equality follows directly from \eqref{equ2.1.8}.

On the other hand, we also have (see \cite[Proposition 4.1]{F})
\begin{align*}
	\dim_{\mathcal{H}}E\le \varliminf_{k\to\infty} \frac{\log (m_1\cdots m_k)}{-\log (\delta N_k^{-1/\beta})}.
\end{align*}
Similarly, we obtain
\begin{align*}
	\dim_{\mathcal{H}}E&\le \varliminf_{k\to\infty} \frac{\log \bigg((2\delta)^{k-1}(N_1\cdots N_{k-1})^{1-\frac{1}{\beta}}N_k\bigg)}{\frac{1}{\beta}\log N_k-\log \delta}\\
	&=\varliminf_{k\to\infty}\frac{\bigg((\frac{1}{c})^k-(\frac{1}{\beta}-1)\frac{(\frac{1}{c})^{k-1}-1}{1-c}-(k-1)\bigg)\log (\frac{1}{2\delta})}{\frac{1}{\beta}(\frac{1}{c})^k\log (\frac{1}{2\delta})-\log \delta}\\
	&=\beta\bigg(1-\frac{\frac{1}{\beta}-1}{1-c}\cdot c\bigg)
	=\tilde{\alpha}.
\end{align*}
Therefore, $\dim_{\mathcal{H}} E = \tilde{\alpha}$.

\emph{Step 3. We prove that \eqref{equ3.1} fails to hold uniformly on $E$.}
Consider the following  trigonometric polynomial of degree $2$:
\begin{equation}\label{equ3.12}
	p_{j}(t)=e^{2\pi itN_j}-e^{-2\pi itN_j}=2i\sin(2\pi N_jt),\quad t\in[0,1],
\end{equation}
where $N_j$ is given by \eqref{equ2.11}.
Clearly, we have
\begin{equation}\label{equ3.2}
	\sup_{t\in[0,1]}|p_{j}(t)|=2\sup_{t\in[0,1]}|\sin{(2\pi N_jt)}|=2.
\end{equation}
We claim that
\begin{align}\label{eq-limit}
	\sup_{t \in E} |\sin(2\pi N_j t)| \longrightarrow 0, \quad\mbox{ as }\, j \to \infty.
\end{align}
Since $t\in E$, then for any $j\in\mathbb{N}$, there exists some $m\in\{0,1,\cdots,2N_j-1\}$, such that
$t\in [\frac{1}{2N_j}m,\frac{1}{2N_j}m+\frac{\delta}{N_j^{1/\beta}}].$
Therefore,
$$0\le2\pi N_jt-m\pi\le\frac{2\pi\delta}{N_j^{1/\beta-1}}<\frac{\pi}{2}.$$
Observe that	$0<\beta<1$ (by \eqref{equ2.1.8}) and  $\lim_{j\to\infty}{N_j}=\infty$, we derive that
\begin{align}\label{equ2.7}
	|\sin(2\pi N_jt)|<|\sin\frac{2\pi\delta}{N_j^{1/\beta-1}}|\leq \frac{2\pi\delta}{N_j^{1/\beta-1}}\to 0, \quad\mbox{ as }\, j \to \infty.
\end{align}
This implies \eqref{eq-limit} and the proof of statement $(i)$ is complete.

\subsection{The proof of  statement $(ii)$.}\label{section-2-2}

The key to establishing inequality \eqref{eq3.1} relies on an estimate  for the logarithmic derivative of polynomials. More precisely, we have
\begin{lemma}\label{lemma-cartan}
	Let $\alpha\in(0,1)$ and $H>0$, there exists $C_0>0$ such that for every algebraic polynomial $P(z)$ of degree $m\ge 1$, the following inequality hold: 
	\begin{equation}\label{equ-car-T}
		C^{\alpha}_{\mathcal{H}}\bigg(\Big\{z\in\mathbb{T}:\;|\frac{d}{dz}\log P(z)|>H\Big\}\bigg)\leq C_0mH^{-\alpha}.
	\end{equation}
\end{lemma}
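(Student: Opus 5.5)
Factor $P(z)=a\prod_{k=1}^m(z-z_k)$, with the roots $z_k\in\mathbb{C}$ repeated according to multiplicity. Then
\[
\frac{d}{dz}\log P(z)=\sum_{k=1}^m\frac{1}{z-z_k}.
\]
This is the Cartan-type setting: the sublevel estimate reduces to a covering statement for a sum of Cauchy kernels. The classical result (Cartan's lemma, or the weak-type $(1,1)$ bound for the Cauchy transform of the measure $\mu=\sum_k\delta_{z_k}$ of total mass $m$) says that $\{z:|\sum_k (z-z_k)^{-1}|>H\}$ can be covered by discs $B(x_j,r_j)$ with $\sum_j r_j\le C m/H$. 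The plan is to prove a covering statement that gives directly $\sum_j r_j^\alpha\le C_0 mH^{-\alpha}$ for the points of $\mathbb{T}$. A naive pass from $\sum r_j$ to $\sum r_j^\alpha$ fails, since it could lose a factor of the number of discs. So the Cartan construction itself has to be run with weights adapted to the exponent $\alpha$.

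Concretely, I would use the Cartan--Boutroux selection argument with a gauge function. Call a point $z$ \emph{$h$-bad} if there is $r>0$ with $\mu(B(z,r))> (rH)^{\alpha}/C$ for a suitable constant $C$. For a good point $z$, every $r$ satisfies $\#\{k:|z-z_k|<r\}\le (rH)^\alpha/C$. Writing $n(r)=\mu(B(z,r))$ and integrating by parts,
\[
\sum_k\frac{1}{|z-z_k|}=\int_0^\infty\frac{dn(r)}{r}\le\int_0^{R}\frac{n(r)}{r^2}\,dr+\frac{m}{R}.
\]
For $r<R$ the first integrand is at most $(H^\alpha/C)\,r^{\alpha-2}$. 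Here is the crux: $\int_0 r^{\alpha-2}dr$ diverges at $0$, because $\alpha<1$. So the pure power-law bound is not enough near $0$. However, $n(r)$ is an integer, and $n(r)<1$ forces $n(r)=0$. Hence $n(r)=0$ for $r<r_0:=C^{1/\alpha}/H$, and the integral starts at $r_0$. It then gives
\[
\frac{H^\alpha}{C}\cdot\frac{r_0^{\alpha-1}}{1-\alpha}\sim \frac{C^{-1/\alpha}\,H}{1-\alpha}.
\]
Choosing $R=\infty$ (the measure is finite, and the tail term $m/R$ vanishes) and $C$ large depending on $\alpha$ makes $|\frac{d}{dz}\log P(z)|\le H$ at every good point.

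It remains to cover the bad points. By the Vitali/Besicovitch-type selection used in Cartan's lemma, the bad set is covered by discs $B(x_j,5r_j)$. These discs are chosen so that the $B(x_j,r_j)$ are disjoint and each satisfies $\mu(B(x_j,r_j))>(r_jH)^\alpha/C$, with the radii $r_j$ bounded because $\mu$ has mass $m$. (Alternatively, run Cartan's greedy procedure: take maximal discs with $\mu(B)\ge (rH)^\alpha/C$ and remove them.) Summing over the disjoint discs,
\[
\sum_j (5r_j)^\alpha\le 5^\alpha C H^{-\alpha}\sum_j\mu(B(x_j,r_j))\le 5^\alpha C\,m H^{-\alpha}.
\]
Intersecting with $\mathbb{T}$ only decreases the content, which gives \eqref{equ-car-T} with $C_0$ depending only on $\alpha$.

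The main obstacle is the good-point estimate near $r=0$, namely the divergence of $\int r^{\alpha-2}dr$. I will resolve it via the integrality of $n(r)$, as above. The selection step needs care to ensure the radii are bounded (use $r\le (Cm)^{1/\alpha}/H$) so that the Vitali covering applies.
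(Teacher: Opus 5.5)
Your proof is correct, but at the decisive step it takes a different route from the paper. Both arguments start from the same dichotomy. Your good points are exactly the paper's normal points (with $C=C_2$), and both dispose of the non-normal points by a covering argument: you use Vitali with disjoint balls, the paper uses Besicovitch with bounded overlap, and either way the content is $\lesssim C\,mH^{-\alpha}$.

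The difference is in how the normal points are handled.
\begin{itemize}
\item You show directly that every normal point satisfies $\sum_k|z-z_k|^{-1}\le C^{-1/\alpha}H/(1-\alpha)\le H$. You use integrality of $n(r)$ at small scales and convergence of $\int^\infty r^{\alpha-2}\,dr$ at large scales. So the superlevel set sits inside the non-normal set and you are done.
\item The paper keeps $C_2$ absolute ($C_2>20$) and argues by contradiction. It proves only the local stability $|\sum_j(z-z_j)^{-1}|>0.8H$ on arcs of radius $0.4/H$ around the surviving set $F$. It then converts Lebesgue measure into $\alpha$-content using Nazarov's one-dimensional bound $\mu\{z\in\mathbb{T}:|\sum_j(z-z_j)^{-1}|>0.8H\}\le\frac{10}{\pi}mH^{-1}$ from \cite{N}, which relies on cancellation along the circle.
\end{itemize}

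Your version is shorter and self-contained, and it never uses that $z\in\mathbb{T}$, so it in fact proves the planar statement. The cost is that your constant $C_0\approx5^\alpha(1-\alpha)^{-\alpha}$ blows up as $\alpha\to1$. This is unavoidable for any argument that ignores the one-dimensionality of $\mathbb{T}$: Eiderman's sharp planar bound, quoted in the remark after the proof, already carries the factor $(1-\alpha)^{-\alpha/2}$, and at $\alpha=1$ your absolute-value estimate loses a factor $\log m$.

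The lemma as stated allows $C_0=C_0(\alpha)$, so nothing breaks. However, the paper's constant is uniform in $\alpha\in(0,1)$. That uniformity is what the remark following Theorem~\ref{thm1} needs when it lets $\alpha\to1$ to recover the classical Tur\'an--Nazarov inequality, and it is what makes $C_0$ in Theorem~\ref{thm1}(ii) genuinely absolute.

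One inaccuracy in your motivation, which your argument does not use: in $\mathbb{C}$ the superlevel set cannot in general be covered with $\sum_j r_j\le Cm/H$. The sharp planar bound is $Cm(\ln m)^{1/2}/H$ \cite{Eid}; the bound $Cm/H$ is a one-dimensional phenomenon.
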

\begin{proof}
	Assuming that $z_1,\cdots,z_m$ are the zeros of $P(z)$, then 
	\begin{equation*}
		C^{\alpha}_{\mathcal{H}}\bigg(\Big\{z\in\mathbb{T}:\;|\frac{d}{dz}\log P(z)|>H\Big\}\bigg)=C^{\alpha}_{\mathcal{H}}\bigg(\Big\{z\in\mathbb{T}:\;|\sum_{j=1}^m\frac{1}{z-z_j}|>H\Big\}\bigg).
	\end{equation*}
	When $m=1$, it is obviously that
	\begin{equation*}
		C^{\alpha}_{\mathcal{H}}\bigg(\Big\{z\in\mathbb{T}:\;|\frac{1}{z-z_1}|>H\Big\}\bigg)\le H^{-\alpha}.
	\end{equation*}
	When $m\ge 2$, we adapt some ideas  from \cite[Theorem 2.4]{ENV}. If \eqref{equ-car-T} is not true, we assume
	\begin{equation}\label{equ-M-define}
		\textbf{M}:=C^{\alpha}_{\mathcal{H}}\bigg(\Big\{z\in\mathbb{T}:\;|\sum_{j=1}^m\frac{1}{z-z_j}|>H\Big\}\bigg)>C_1 mH^{-\alpha}
	\end{equation}
	with sufficiently large $C_1>0$.
	The proof relies on the following claim:
	
	\emph{Claim: There exists $F\subset\mathbb{T}$ satisfying
		\begin{equation}\label{equ-F-lowerbound}
			C^{\alpha}_{\mathcal{H}}(F)\ge 0.6\textbf{M},
		\end{equation}
		and
		\begin{equation}\label{equ-subset}
			B(z_0,\frac{0.4}{H})\bigcap\mathbb{T}\subset \Big\{z\in\mathbb{T}:\;|\sum_{j=1}^m\frac{1}{z-z_j}|>0.8H\Big\}\quad\mbox{for every }z_0\in F.
		\end{equation}
	}
	Having introduced the \emph{Claim} (whose proof we postpone), let $\mu$ be the 1-dimensional Lebesgue measure and set
	\begin{equation*}
		G:=\bigcup_{z\in F}\big(B(z,\frac{0.4}{H})\cap\mathbb{T}\big).
	\end{equation*}
	By \eqref{equ-subset}, we have $G\subset\Big\{z\in\mathbb{T}:\;|\sum_{j=1}^m\frac{1}{z-z_j}|>0.8H\Big\}$. Applying  \cite[Lemma 1.2]{N}, we obtain
	\begin{equation}\label{eq-upper-bound}
		\mu(G)\le\mu\{z\in\mathbb{T}: |\sum_{j=1}^m\frac{1}{z-z_j}|>0.8H\}\le \frac{10}{\pi}mH^{-1}.
	\end{equation}
	On the other hand, the set $G$ can be covered by sets $B_j\cap\mathbb{T}$  of radius $\frac{1}{H}$  such  that
	\begin{equation*}
		\mu(G)\ge c\sum_j \frac{1}{H}.
	\end{equation*}
	Then, by \eqref{equ-F-lowerbound}
	\begin{align*}
		\mu(G)\ge c\sum_j \frac{1}{H}=cH^{\alpha-1}\sum_j (\frac{1}{H})^\alpha\ge cH^{\alpha-1}C_\mathcal{H}^\alpha(G)\ge cH^{\alpha-1}C_\mathcal{H}^\alpha(F)\ge 0.6cH^{\alpha-1}\textbf{M}.
	\end{align*}
	This, together with \eqref{eq-upper-bound}, implies
	\begin{equation*}
		\textbf{M}\le CmH^{-\alpha},
	\end{equation*}
	where $C=\frac{50}{3\pi c}$ is independent of $m$ and $H$. This completes the proof of Lemma \ref{lemma-cartan}. 
	
	It remains to prove the \emph{Claim}. We say that a point $z_0\in\mathbb{T}$ is \emph{normal} if the inequality
	\begin{equation*}
		\#\{z_j:z_j\in B(z_0,r)\}\le C_2^{-1}H^\alpha r^\alpha
	\end{equation*} 
	holds for all $r\ge 0$, where $C_2<C_1$ will be
	specified later. Let $G_1$ be the set of all \emph{non-normal} points $z\in\mathbb{T}$. Thus for each $z\in G_1$, there exists $r=r(z)$ such that 
	\begin{equation}\label{equ-r-bound}
		r^\alpha<C_2H^{-\alpha}\#\{z_j:z_j\in B(z_0,r)\}.
	\end{equation}
	This yields a covering of $G_1$ by sets $B(z,r(z))\cap\mathbb{T}$. 
	By the Besicovitch covering lemma, there is a subcover $\{B_k'\cap\mathbb{T}\}$, with $B_k'=B(w_k',r_k')$, of multiplicity at most  $A_0$ (i.e., every point $z\in G_1$ is covered by at most $A_0$ such sets). Let $\mathcal{Z}_1=\cup_k(B_k'\cap \mathbb{T})$, then from \eqref{equ-r-bound}, we obtain
	\begin{equation*}
		C^{\alpha}_{\mathcal{H}}(\mathcal{Z}_1)\le \sum_k(r_k')^\alpha<C_2H^{-\alpha}\sum_k\#\{z_j:z_j\in B(w_k',r_k')\}\le A_0C_2H^{-\alpha}m.
	\end{equation*}
	Combining this with \eqref{equ-M-define}, we derive that
	\begin{equation*}
		C^{\alpha}_{\mathcal{H}}(\mathcal{Z}_1)<0.3\textbf{M},\qquad \mbox{if}\,\, A_0C_2<0.3C_1.
	\end{equation*}
	Setting $\mathcal{Z}_2=\cup_{j=1}^m(B(z_j,\frac{1}{H})\cap\mathbb{T})$, we deduce from \eqref{equ-M-define} that
	\begin{equation*}
		C^{\alpha}_{\mathcal{H}}(\mathcal{Z}_2)\le mH^{-\alpha}<0.1\textbf{M},\qquad \mbox{if}\,\, C_1>10.
	\end{equation*}
	We define the set $F$ by
	\begin{equation}
		F=\Big\{z\in\mathbb{T}:\;|\sum_{j=1}^m\frac{1}{z-z_j}|>H\Big\}\setminus(\mathcal{Z}_1\cup\mathcal{Z}_2).
	\end{equation}
	It follows immediately that
	\begin{equation*}
		C^{\alpha}_{\mathcal{H}}(F)\ge \textbf{M}-0.3\textbf{M}-0.1\textbf{M}=0.6\textbf{M},
	\end{equation*}
	and  hence \eqref{equ-F-lowerbound} is satisfied.
	
	We next verify \eqref{equ-subset}.  
	Fix $z_0\in F$, for any $z\in B(z_0,\frac{0.4}{H})\cap\mathbb{T}$, we have 
	\begin{equation}\label{equ-F-prop1}
		|\sum_{j=1}^m\frac{1}{z_0-z_j}|>H,
	\end{equation}
	\begin{equation}\label{equ-F-prop2}
		|z_0-z_j|\ge \frac{1}{H},\quad\forall j,
	\end{equation}
	and
	\begin{equation}\label{equ-F-prop3}
		\#\{z_j:z_j\in B(z_0,r)\}\le C_2^{-1}H^\alpha r^\alpha,\,\;\forall r\ge 0. 
	\end{equation}
	Now define, for $k=0,1,\cdots,$
	\begin{equation*}
		B_k=\{j:\frac{2^k}{H}\le |z_0-z_j|<\frac{2^{k+1}}{H}\}.
	\end{equation*}
	By \eqref{equ-F-prop2}, we have $j\in\cup_{k=0}^\infty B_k$.
	Moreover, using \eqref{equ-F-prop2} and the fact $|z-z_0|\le \frac{0.4}{H}$, we obtain
	\begin{equation*}
		|z-z_j|\ge|z_0-z_j|-|z-z_0|>\frac{2^{k-1}}{H},\,\,\;\forall j\in B_k.
	\end{equation*}
	Thus,
	\begin{align*}
		&\quad\big|\sum_{j=1}^m(\frac{1}{z-z_j}-\frac{1}{z_0-z_j})\big|=\big|\sum_{j=1}^m\frac{z-z_0}{(z-z_j)(z_0-z_j)}\big|\\
		&\le \frac{0.4}{H}\sum_{k=0}^\infty\sum_{j\in B_k}\frac{H^2}{2^{2k-1}}\le 0.4H\sum_{k=0}^\infty\big(\frac{1}{2^{2k-1}}\cdot\#B_k\big)\\
		&\le 0.4H\sum_{k=0}^\infty\frac{1}{2^{2k-1}}\cdot C_2^{-1}H^\alpha(\frac{2^{k+1}}{H})^\alpha\quad\mbox{(by \eqref{equ-F-prop3})}\\
		&\le \frac{4H}{C_2}<0.2H,
	\end{align*}
	provided  $C_2>20$. 
	Hence, from \eqref{equ-F-prop1}, it follows that
	\begin{equation*}
		|\sum_{j=1}^m\frac{1}{z-z_j}|\ge  |\sum_{j=1}^m\frac{1}{z_0-z_j}|- |\sum_{j=1}^m(\frac{1}{z-z_j}-\frac{1}{z_0-z_j})|>H-0.2H=0.8H.
	\end{equation*}
	Since $z_0\in F$ is arbitrary, this establishes \eqref{equ-subset} and completes the proof of the Lemma.
\end{proof}

\begin{remark}
	We briefly recall some background on the estimate \eqref{equ-car-T} in Lemma \ref{lemma-cartan}. 
	In the case $\alpha=1$, Nazarov showed that \eqref{equ-car-T} holds with the explicit constant $C_0=\pi/8$. Moreover, when $\mathbb{T}$ is replaced by $\R$, he obtained an analogous inequality with $C_0=8$ (see \cite[lemma 1.2]{N}).  If $\mathbb{T}$ is replaced by $\mathbb{C}$, then, up to the value of the constant $C$, the optimal estimate is given by \cite[Theorem 3.2]{Eid} (see also \cite{AE1,AE2}): Let $P(z)$ be a polynomial of degree $m$, then
	\begin{equation*}
		C^{1}_{\mathcal{H}}\bigg(\Big\{z\in\mathbb{C}:\;|\frac{d}{dz}\log P(z)|>H\Big\}\bigg)\le C m(\ln m)^{1/2}H^{-1},\qquad H>0.
	\end{equation*}
	
	The case $\alpha\in(0,1)$ is more subtle. It was established in \cite[(4.2)]{Eid} (with $d=\alpha$, $N=\|\nu\|=m$, and $P=H$) that
	\begin{equation*}
		C^{\alpha}_{\mathcal{H}}\bigg(\Big\{z\in\mathbb{C}:\;|\frac{d}{dz}\log P(x)|>H\Big\}\bigg)\leq Cm(\frac{1}{1-\alpha})^{\alpha/2}H^{-\alpha}.
	\end{equation*}
	Moreover, the exponent $\alpha/2$ is sharp, see \cite[Theorem 4.4]{Eid}. However, if we replace $\mathbb{C}$ by $\R$, then  \cite[Theorem 2.4]{ENV} (with $d=s=1$, $N=\|\nu\|=m$, $P=H$ and $h(t)=t^\alpha$) implies
	\begin{equation}\label{equ-car-R}
		C^{\alpha}_{\mathcal{H}}\bigg(\Big\{z\in\mathbb{R}:\;|\frac{d}{dz}\log P(z)|>H\Big\}\bigg)\leq CmH^{-\alpha}.
	\end{equation}
\end{remark}

\emph{ The proof of statement $(ii)$.}
We express the exponential sum \eqref{eq-tri-01} in polynomial form by setting $z=e^{2\pi it}$, yielding 
$$
p=\sum_{k=1}^nc_ke^{2\pi im_kt}=\sum_{k=1}^n c_k z^{m_k}.
$$  
Furthermore, we introduce the norm
\begin{equation}\label{eq-norm}
	\|p\|_{W}:= \sum_{k=1}^n|c_k|.
\end{equation}
Thus,  to establish \eqref{eq3.1}, it suffices to prove that
\begin{align}\label{equ3.1.0}
	\|p\|_{W}\leq \Bigg\{\Big(\frac{4C_0(n-1)}{C^{\alpha}_{\mathcal{H}}(E)}\Big)^{1/\alpha}(m_n-m_1)^{1/\alpha-1}\Bigg\}^{n-1}\sup_{z\in E}|p(z)|,
\end{align}
where $E\subset\mathbb{T}$ satisfies $C^{\alpha}_{\mathcal{H}}(E)>0$. 
Following the approach in \cite{N}, the proof of \eqref{equ3.1.0} will be established through four steps.

\emph{Step 1: We construct a sequence of polynomials $p_n,\,p_{n-1},\,\cdots,\,p_1$ satisfying:}

1) Initial condition: $p_n=p$;

2)   Degree constraint: $\mbox{deg}\, p_k=k$, $\;\,\,1\le k\le n$\footnote{Throughout the proof, $\mbox{deg}$ denotes the degree of a trigonometric polynomial.};

3) Norm inequality: $\| p_{k-1}\|_W\geq \frac{1}{2}\| p_k\|_W$.

We begin with $p_n=p$.  For a given polynomial $p_k(z)=\sum_{s=1}^kd_sz^{r_s}$ (with strictly increasing integer exponents $r_1<r_2<\cdots<r_k)$,
We denote by  $r^{(k)}:=r_k-r_1$, which represents the difference between the highest and lowest degree of $p_k(z)$. 
Now we introduce two auxiliary polynomials
\begin{equation*}
	\underline{q}(z):= \frac{d}{dz}(z^{-r_1}p_k(z))\quad\mbox{ and }\quad \overline{q}(z):=  \frac{d}{dz}(z^{-r_k}p_k(z)).
\end{equation*}
Obviously, $\mbox{deg}\,\underline{q}=\mbox{deg}\,\overline{q}=k-1$. Moreover,
\begin{equation*}
	\| \underline{q}\|_W=\sum_{s=1}^k|d_s|(r_s-r_1),\quad\quad\| \overline{q}\|_W=\sum_{s=1}^k|d_s|(r_k-r_s),
\end{equation*}
whence
\begin{equation*}
	\| \underline{q}\|_W+\| \overline{q}\|_W=(r_k-r_1)\sum_{s=1}^k|d_s|=r^{(k)}\| p_k\|_W,
\end{equation*}
This implies at least one of the following must hold: 
$$\| \overline{q}\|_W\geq \frac{r^{(k)}}{2}\| p_k\|_W\quad\mbox{or} \quad \| \underline{q}\|_W\geq \frac{r^{(k)}}{2}\| p_k\|_W.$$
Without loss of generality, we consider the first case (the second case follows similarly). Define
\begin{equation}\label{eq-p-k-1-530}
	p_{k-1}(z)=\frac{1}{r^{(k)}}\overline{q}(z), 
\end{equation}
we immediately obtain the required norm inequality
\begin{equation*}
	\| p_{k-1}\|_W=\frac{1}{r^{(k)}}\| \overline{q}\|_W\geq\frac{1}{2}\| p_k\|_W.
\end{equation*}
This construction satisfies all three conditions 1), 2) and 3). 
Furthermore, the sequence satisfies the monotonicity property
\begin{align}\label{equ3.14}
	r^{(k-1)}\leq r^{(k)}\leq r^{(n)}=m_n-m_1,\quad \mbox{for}\,\,2\leq k\leq n.
\end{align}
To see this, observe from \eqref{eq-p-k-1-530} that
\begin{equation*}
	p_{k-1}(z)=\frac{1}{r^{(k)}}\sum_{s=1}^{k-1}d_s(r_s-r_k)z^{r_s-r_k-1},
\end{equation*}
which implies
\begin{equation*}
	r^{(k-1)}=r_{k-1}-r_1<r_k-r_1=r^{(k)}.
\end{equation*}

\emph{Step 2: We prove that there exists some $z_0\in E$, such that
	\begin{equation}\label{equ3.11}
		\sum_{k=2}^{n}\log\varphi_k(z_0)\leq \frac{n-1}{\alpha}\log\left(\frac{2C_0(n-1)(m_n-m_1)^{1-\alpha}}{C^{\alpha}_{\mathcal{H}}(E)}\right),
\end{equation}}
where   $\varphi_k:=|\frac{p_{k-1}}{p_k}|$, $2\le k\le n$.

We begin by observing the  polynomial representation:  
\begin{equation}\label{eq-p-k-g}
	z^{-r_k}p_k(z)=g(1/z),
\end{equation}
where $g$ is an algebraic polynomial of degree $r^{(k)}$. This yields
\begin{equation}\label{eq-q-g'}
	\overline{q}(z)=-\frac{1}{z^2}g'(1/z).
\end{equation}
Proceeding by contradiction, we assume that \eqref{equ3.11} does not hold: for every $z\in E$,
\begin{equation*}
	\sum_{k=2}^{n}\log\varphi_k(z)> \frac{n-1}{\alpha}\log\left(\frac{2C_0(n-1)(m_n-m_1)^{1-\alpha}}{C^{\alpha}_{\mathcal{H}}(E)}\right).
\end{equation*}
This assumption implies
\begin{align}\label{equ3.16}
	E\subset\bigg\{z\in\mathbb{T}:\;\sum_{k=2}^{n}\log\varphi_k(z)> \frac{n-1}{\alpha}\log\bigg(\frac{2C_0(n-1)(m_n-m_1)^{1-\alpha}}{C^{\alpha}_{\mathcal{H}}(E)}\bigg)\bigg\},
\end{align}
from which we derive
\begin{align}\label{equ3.17}
	C^{\alpha}_{\mathcal{H}}(E)
	&\leq \sum_{k=2}^{n}C^{\alpha}_{\mathcal{H}}\Bigg(\bigg\{z\in\mathbb{T}:\;\log\varphi_k(z)> \frac{1}{\alpha}\log\bigg(\frac{2C_0(n-1)(m_n-m_1)^{1-\alpha}}{C^{\alpha}_{\mathcal{H}}(E)}\bigg)\bigg\}\Bigg)\notag\\
	&=\sum_{k=2}^{n}C^{\alpha}_{\mathcal{H}}\Bigg(\bigg\{z\in\mathbb{T}:\;|\frac{p_{k-1}(z)}{p_k(z)}|> \bigg(\frac{2C_0(n-1)(m_n-m_1)^{1-\alpha}}{C^{\alpha}_{\mathcal{H}}(E)}\bigg)^{\frac{1}{\alpha}}\bigg\}\Bigg)\quad(\mbox{by }\varphi_k=|\frac{p_{k-1}}{p_k}|)\notag\\
	&=\sum_{k=2}^{n}C^{\alpha}_{\mathcal{H}}\Bigg(\bigg\{z\in\mathbb{T}:\;|\frac{g'(1/z)}{g(1/z)}|>r^{(k)} \bigg(\frac{2C_0(n-1)(m_n-m_1)^{1-\alpha}}{C^{\alpha}_{\mathcal{H}}(E)}\bigg)^{\frac{1}{\alpha}}\bigg\}\Bigg)\notag\\
	&=\sum_{k=2}^{n}C^{\alpha}_{\mathcal{H}}\Bigg(\bigg\{z\in\mathbb{T}:\;|\frac{d}{dz}\log g(z)|>r^{(k)} \bigg(\frac{2C_0(n-1)(m_n-m_1)^{1-\alpha}}{C^{\alpha}_{\mathcal{H}}(E)}\bigg)^{\frac{1}{\alpha}}\bigg\}\Bigg)\notag\\
	&\leq \sum_{k=2}^{n}C_0 r^{(k)} \Bigg(r^{(k)} \bigg(\frac{2C_0(n-1)(m_n-m_1)^{1-\alpha}}{C^{\alpha}_{\mathcal{H}}(E)}\bigg)^{\frac{1}{\alpha}}\Bigg)^{-\alpha}\quad(\mbox{by \eqref{equ-car-T} in Lemma \ref{lemma-cartan}})\notag\\
	&=\sum_{k=2}^{n} (r^{(k)})^{1-\alpha}\frac{C^{\alpha}_{\mathcal{H}}(E)}{2(n-1)(m_n-m_1)^{1-\alpha}}\notag\\
	&\leq \sum_{k=2}^{n} \frac{C^{\alpha}_{\mathcal{H}}(E)}{2(n-1)}\quad\quad(\mbox{by }\eqref{equ3.14}\mbox{ and }\alpha<1)\notag\\
	&=\frac{C^{\alpha}_{\mathcal{H}}(E)}{2}<C^{\alpha}_{\mathcal{H}}(E),
\end{align}
which leads to a contradiction. In \eqref{equ3.17}, the first inequality follows from \eqref{equ3.16} and subadditivity, while the second equality is justified by the identities \eqref{eq-p-k-1-530}, \eqref{eq-p-k-g} and \eqref{eq-q-g'}.
Consequently, \eqref{equ3.11} holds.

\emph{Step 3: We establish  \eqref{eq3.1}.}

In fact, \eqref{equ3.1.0} follows from the following sequence of inequalities:
\begin{align*}
	\|p\|_W&\leq 2^{n-1}\|p_1\|_W\stackrel{(deg\; p_1=1)}{=}2^{n-1}|p_1(z_0)|
	=2^{n-1}\prod_{k=2}^n\varphi_k(z_0)\cdot|p_n(z_0)|\\
	&\leq2^{n-1}\exp\bigg(\sum_{k=2}^n\log\varphi_k(z_0)\bigg)\cdot\sup_{z\in E}|p_n(z)|\quad(\mbox{by }z_0\in E)\\
	&\leq 2^{n-1}\Bigg(\frac{2C_0(n-1)(m_n-m_1)^{1-\alpha}}{C^{\alpha}_{\mathcal{H}}(E)}\Bigg)^{\frac{n-1}{\alpha}}\cdot\sup_{z\in E}|p_n(z)|\quad\;(\mbox{by \eqref{equ3.11}})\\
	&\le\Bigg(\Big(\frac{4C_0(n-1)}{C^{\alpha}_{\mathcal{H}}(E)}\Big)^{1/\alpha}
	(m_n-m_1)^{1/\alpha-1}\Bigg)^{n-1}\cdot\sup_{z\in E}|p(z)|.
\end{align*}

\emph{Step 4: We prove the sharpness of  \eqref{eq3.1}.}
Let $\varepsilon_0>0$ be arbitrarily small and take $p=p_{j}$ as defined in \eqref{equ3.12}. Let $\beta$ and $E$ be given by \eqref{equ2.1.8} and \eqref{equ2.4}, respectively. Recall that  $\dim_{\mathcal{H}}E=\tilde{\alpha}$, which implies $C^{\tilde{\alpha}-\varepsilon}_{\mathcal{H}}(E)>0$ for all $0<\varepsilon<\tilde{\alpha}$. By \eqref{equ2.1.7}, we may assume $c=\frac{1}{M}$ where $M\ge 3$ is an integer. Further, we suppose that
\begin{align*}
	M>\frac{\varepsilon_0\tilde{\alpha}^2+2(1-\tilde{\alpha})-\varepsilon_0\tilde{\alpha}}{\varepsilon_0\tilde{\alpha}^2},\quad\varepsilon\le\tilde{\varepsilon}:=\frac{\varepsilon_0\tilde{\alpha}\beta-\frac{2}{M}(1-\tilde{\alpha})}{2+\varepsilon_0\beta}.
\end{align*}
Under these conditions, it follows that $\tilde{\varepsilon}>0$ and
\begin{align}\label{equ3.15}
	\frac{1}{\tilde{\alpha}-\varepsilon}-\frac{1}{\beta}\le\frac{\varepsilon_0}{2}.
\end{align}
The selection of $p_j$ gives $n=2$ and $m_2-m_1=2N_j$. Combining \eqref{equ2.7} with \eqref{equ3.15}, we obtain
\begin{align*}
	&\quad\bigg\{(m_n-m_1)^{1/(\tilde{\alpha}-\varepsilon)-1-\varepsilon_0}\bigg\}^{n-1}\sup_{t\in E}|p(t)|= (2N_j)^{1/(\tilde{\alpha}-\varepsilon)-1-\varepsilon_0}\cdot\sup_{t\in E}|2\sin(2\pi N_jt)|\\
	&\le (2N_j)^{1/(\tilde{\alpha}-\varepsilon)-1-\varepsilon_0}\cdot2\cdot\frac{2\pi\delta}{N_j^{1/\beta-1}}=\frac{c}{N_j^{1/\beta-1/(\tilde{\alpha}-\varepsilon)+\varepsilon_0}}\le \frac{c}{N_j^{\varepsilon_0/2}}\to 0,\quad j\to\infty.
\end{align*}
Therefore, there exists $E\subset[0,1]$ with $C^{\alpha}_{\mathcal{H}}(E)>0$ (where $\alpha=\tilde{\alpha}-\varepsilon$) such that 
\begin{equation*}
	\sup_{t\in [0,1]}|p(t)|\le C\cdot\bigg((m_n-m_1)^{1/\alpha-1-\varepsilon_0}\bigg)^{n-1}\sup_{t\in E}|p(t)|
\end{equation*}
fails for $p=p_j$. Therefore we complete the proof of Theorem  \ref{thm1}.
\qed
\section{Proofs of Theorem \ref{thm3} and Theorem \ref{thm4}}\label{section3}

\subsection{The proof of Theorem \ref{thm3}.}

We begin by constructing a set $E\subset (0,\pi)$ with $C^{\alpha}_{\mathcal{H}}(E)>0$. Following the methodology of Theorem \ref{thm1}, we fix $\delta\in(0,1)$ such that $0<\delta<\frac{\pi}{3}$,  assume $\frac{\pi}{\delta}\in \mathbb{N}$ and take $\tilde{\alpha}\in (\alpha,1)$. 
Let $c$ be as defined in \eqref{equ2.1.7}, with $\frac{1}{c}\in\mathbb{N}$. We then define the sequence
\begin{equation}\label{equ4.3}
	N_j=(\frac{\pi}{\delta})^{s_j},\;\mbox{ where }s_j=(\frac{1}{c})^{j}.
\end{equation}
Thus $N_j\in\mathbb{N}$, $N_j<N_{j+1}$ and $\lim_{j\to\infty}{N_j}=\infty$. Let 
\begin{align}\label{equ4.4}
	J_{m,\delta}(j)=(\frac{\pi}{N_j}m,\frac{\pi}{N_j}m+\frac{\delta}{N_j^{1/\beta}})\cap (0,\pi),\qquad m\in \mathbb{Z},
\end{align}
where $\beta$ is given by \eqref{equ2.1.8}. 
The set $E$ is constructed through
\begin{equation}
	E= {\bigcap\limits_{j=1}^\infty} {\bigcup\limits_{m\in\mathbb{Z}}}J_{m,\delta}(j).
\end{equation}
Following the proof of Theorem \ref{thm1}, we have $\dim_{\mathcal{H}}E=\tilde{\alpha}$, which implies that $C^{\alpha}_{\mathcal{H}}(E)>0$.

We now prove that the observability inequality   \eqref{eq-heat} fails  for a class of $C^{\infty}$ initial data. Since the torus $\mathbb{T}$ can be identified with the interval $[0,2\pi)$ endowed with periodic boundary conditions. We consider the following initial data: 
\begin{align}\label{equ4.5}
	u_{0,j}=\sin{(N_jx)},\quad j\in\mathbb{N},
\end{align}
where $N_j$ is given by \eqref{equ4.3}. Obviously, $u_{0,j}$ is $2\pi$-periodic and thus naturally defined on $\mathbb{T}$. Moreover, $ u_{0,j}\in C^\infty(\mathbb{T})$ for any $j\in \mathbb{N}$, and 
\begin{equation}\label{eq-u_n-0-6-21}
	\| u_{0,j}\|_{L^2(\mathbb{T})}:=\| u_{0,j}\|_{L^2((0,2\pi))}=\sqrt{\pi},\qquad \mbox{for all}\,\,j\in\mathbb{N}.
\end{equation}
Furthermore, the corresponding solution to \eqref{equation1.2}  is given by 
\begin{equation*}
	u_j(t,x)=e^{-iN_j^2t}\sin{(N_j x)}.
\end{equation*}
Thus, 
\begin{align}\label{equ4.6}
	\int_{0}^T\sup_{x\in E}|u_j(t,x)|dt=T\cdot\sup_{x\in E}\left|\sin(N_j x)\right|.
\end{align}
We establish the critical decay property
\begin{align}\label{equ4.7}
	\sup\limits_{x\in E}\big|\sin(N_j x)\big|\to 0,\qquad\mbox{as }j\to\infty.
\end{align}
Indeed, for any $x\in E$ and  $j\in\mathbb{N}$, there exists some $m\in\mathbb{Z}$, such that
$x\in (\frac{\pi}{N_j}m,\frac{\pi}{N_j}m+\frac{\delta}{N_j^{1/\beta}}).$
Therefore,
$$0<N_j x-m\pi<\delta\cdot\frac{1}{N_j^{1/\beta-1}}<\frac{\pi}{2}.$$
Since $\beta<1$ (by \eqref{equ2.1.8}) and the sequence $N_j\to\infty$ as $j\to\infty$, we deduce that for all $x\in E$,
$$\big|\sin(N_j x)\big|<\big|\sin(\delta\cdot\frac{1}{N_j^{1/\beta-1}})\big|\leq \delta\cdot\frac{1}{N_j^{1/\beta-1}}\to 0,\quad j\to \infty.$$
Thus \eqref{equ4.7} holds.
Combining \eqref{eq-u_n-0-6-21}, \eqref{equ4.6} and \eqref{equ4.7} shows that inequality   \eqref{eq-heat} does not hold for initial data $u_{0,j}$. 
Therefore, the proof of Theorem \ref{thm3} is complete.\qed

\subsection{The proof of Theorem \ref{thm4}.}

In the following, we write $u(t,x)$ (with $(t,x)\in(0,\infty)\times\R$) for the solution of \eqref{equation1.2} with the initial condition $u(0,x)=u_0(x)$ over $\R$. We recall that for any $T>0$ and $u_0\in L^2(\R)$, the solution can be rewritten in the form (see e.g. in \cite{EKPV}), 
\begin{equation}\label{equ2.24}
	u(T,x)=\frac{1}{(2iT)^{1/2}}e^{ix^2/{4T}}\widehat{e^{i\xi^2/{4T}}u_0(\xi)}(x/{2T}),\quad x\in\R.
\end{equation}

We first prove statement $(i)$. By the conservation law for the  Schr\"{o}dinger equation, i.e., $\|u(T,\cdot;u_0)\|_{L^2}=\|u_0\|_{L^2}$ holds for any $T\ge 0$, it suffices to prove \eqref{equ4.2} for $S=0$.

Our argument relies on the following uncertainty principle due to Kovrizhkin \cite[Theorem 1.1]{Kov}, which  generalizes \cite[Theorem 2.1]{SVW}: 
If $\rho$ and $\tilde{\rho}$ are continuous non-increasing functions satisfying 
\begin{equation}\label{eq-thin-1024}
	\frac{C_1}{\tilde\rho(\frac{C_2}{\rho(t)})}\ge t,\qquad\;t> 0,
\end{equation}
for some absolute constants $C_1, C_2>0$, then there exist $\varepsilon > 0$ and $C > 0$ such that for any pair of $\varepsilon$-thin sets $E^c$ and $F^c$ with respect to $\rho$ and $\tilde\rho$, the following inequality holds:
\begin{equation}\label{eq-UP-W}
	\int_{\R}|f|^2 \leq C\left(\int_{E}|f(x)|^2 \d x + \int_{F}|\hat{f}(x)|^2 \d x\right),\qquad f\in L^2.
\end{equation}

To apply this  result, we  require the following two facts. 

$(i)$  If $E\subset\R$ is $\varepsilon$-thin with respect to $\rho$ defined in \eqref{eq-rho}, then for any fixed $r>0$, the scaled set $rE:=\{rx,\,\,x\in E\}$ is $\varepsilon$-thin with respect to 
\begin{equation}\label{eq-rho-tild}
	\tilde\rho(|x|)=\min(r,r^2|x|^{-1}).
\end{equation}
Indeed,  since
\begin{align}
	\frac{\tilde\rho(|x|)}{r}=
	\begin{cases}
		1,\quad\quad\;\mbox{if }|x|\le r,\\
		|\frac{x}{r}|^{-1},\quad\mbox{if }|x|\ge r.
	\end{cases}
\end{align}
Then from the definition of $\varepsilon$-thinness \eqref{eq-def-rho-1024}, we derive that
\begin{align*}
	|rE\cap B(x,\tilde\rho(|x|))|=r|E\cap B(\frac{x}{r},\frac{\tilde\rho(|x|)}{r})|\le r\cdot\varepsilon|B(\frac{x}{r},\frac{\tilde\rho(|x|)}{r})|=\varepsilon|B(x,\tilde\rho(|x|))|.
\end{align*}

$(ii)$ For $\rho$ and $ \tilde\rho$  given by \eqref{eq-rho} and \eqref{eq-rho-tild} respectively,  the following inequality holds
\begin{equation*}
	\frac{r}{\tilde\rho(\frac{r}{\rho(t)})}\ge t,\qquad\forall\;t> 0.
\end{equation*}
In fact, this can be verified by direct computation:
\begin{align*}
	\frac{r}{\tilde\rho(\frac{r}{\rho(t)})}=\frac{r}{\min\{r,r^2(\frac{r}{\min\{1,t^{-1}\}})^{-1}\}}=\frac{1}{\min\{1,\min\{1,t^{-1}\}\}}
	=\begin{cases}
		1,\quad\mbox{if } 0\le t\le 1,\\
		t,\,\quad\mbox{if } t>1.
	\end{cases}
\end{align*}

Based on these two facts,  we can now apply the  uncertainty principle \eqref{eq-UP-W} with $E=E_1$ and $F=E_2/2T=\{x/2T,\,\,x\in E_2\}$. Consequently,  for any $T>0$, there exists $C > 0$ such that
\begin{equation}\label{eq-UP-W-1024}
	\int_{\R}|f|^2 \leq C\left(\int_{E_1}|f(x)|^2 \d x + \int_{E_2/2T}|\hat{f}(x)|^2 \d x\right),\qquad f\in L^2.
\end{equation}

Take
$f(x)=e^{ix^2/{4T}}u_0(x)$ in \eqref{eq-UP-W-1024} and use the identity \eqref{equ2.24},
we obtain that
\begin{align}
	\|u_0\|_{L^2(\R)}^2&\leq  C\left(\int_{E_1}|u_0|^2 \d x + \int_{E_2}|u(T,x)|^2 \d x\right) \nonumber\\
	&= C\sum_{k\in\mathbb{Z}}  \left(\int_{E_1\cap[k,k+1]}|u_0|^2 \d x + \int_{E_2\cap[k,k+1]}|u(T,x)|^2 \d x\right)      \nonumber\\
	&\leq C\,\bigg(\big\|\big\{\sup_{E_1\cap[k,k+1]} |u_0|\big\}_{k\in\mathbb{Z}}\big\|_{l^2(\mathbb{Z})}^2+\big\|\big\{\sup_{E_2\cap[k,k+1]} |u(T,x)|\big\}_{k\in\mathbb{Z}}\big\|_{l^2(\mathbb{Z})}^2\bigg).  
\end{align}

Now  we establish statement $(ii)$.
For simplicity of exposition, we normalize the time parameters by setting $S=0$ and $T=1/2$ without loss of generality. The general case $T=S+\tau$ for arbitrary $\tau>0$ follows through scaling arguments and the fact $e^{it\Delta}$ is unitary.

Our construction begins with the following test functions
\begin{equation}\label{eq-sch-R-01}
	f_n(x)=\sin{(nx)}\left(e^{-(x+\pi n)^2/2}-e^{-(x-\pi n)^2/2}\right), \qquad n\in\mathbb{N}.  
\end{equation}
A direct computation yields 
\begin{align}\label{eq-f-n-01}
	\int_{\R}|f_n(x)|^2dx&=\int_{\R}\sin^2{(nx)}\left(e^{-(x+\pi n)^2}+e^{-(x-\pi n)^2}-2e^{-(x^2+(\pi n)^2)}\right)dx\nonumber\\
	&=\sqrt{\pi}\left(1-e^{-(\pi n)^2}\right)\left(1-e^{-n^2}\right)\to \sqrt{\pi},\quad \mbox{as}\,\,n\to\infty.
\end{align}
The Fourier transform of $f_n$ exhibits a structure analogous to $f_n(x)$. Specifically, we have
\begin{align}\label{eq-f-n-01-02}
	\hat{f_n}(\xi)
	&=\frac{1}{2i\sqrt{2\pi}}\int_{\R}e^{-ix\xi}(e^{inx}-e^{-inx})(e^{-(x+\pi n)^2/2}-e^{-(x-\pi n)^2/2})dx\nonumber\\
	&=(-1)^{n^2}\cdot\sin{(\pi n\xi)}\left(e^{-(\xi-n)^2/2}-e^{-(\xi+n)^2/2}\right).
\end{align}

Second, we define the sets $E_1$ and $E_2$ as follows. For $E_1$, we take $N_j$ as give by \eqref{equ4.3} and define the intervals 
\begin{equation*}
	J_{m,\delta}^{(1)}(j)=[\frac{\pi}{N_j}m,\, \,\frac{\pi}{N_j}m+\frac{\delta}{N_j^{1/\beta}}],
\end{equation*}
where $m\in\mathbb{Z}$, and $\beta$ is given by  \eqref{equ2.1.8}. The set $E_1$ is then constructed as
\begin{equation}\label{eq-E1-6-2}
	E_1=\bigcap_{j=1}^{\infty}\bigcup_{m\in\mathbb{Z}}J_{m,\delta}^{(1)}(j).
\end{equation}
Following the proof of Theorem \ref{thm1}, we deduce that
\begin{equation*}
	\dim_{\mathcal{H}}(E_1\cap[0,\pi])=\tilde{\alpha}.
\end{equation*}
Moreover, due to the uniform distribution of  $E_1$ on $\R$, this property holds uniformly across all translates:
$$\dim_{\mathcal{H}}(E_1\cap[x,x+\pi])=\tilde{\alpha},\qquad \mbox{for any }\,x\in\R.$$
Consequently, the Hausdorff capacity satisfies $\inf_{x\in\R}C^{\alpha}_{\mathcal{H}}(E_1\cap[x,x+\pi])>0$ for all $x\in\R$.

Similarly, for $E_2$, we take $N_j$ as given by \eqref{equ4.3}, with $\pi$ replaced by 1, and define
\begin{equation*}
	J_{m,\delta}^{(2)}(j)=[\frac{1}{N_j}m,\,\,\frac{1}{N_j}m+\frac{\delta}{N_j^{1/\beta}}],\quad m\in\mathbb{Z},
\end{equation*}
and the set $E_2$ is then given by
\begin{equation}\label{eq-E1-6-2-2}
	E_2=\bigcap_{j=1}^{\infty}\bigcup_{m\in\mathbb{Z}}J_{m,\delta}^{(2)}(j). 
\end{equation}
Analogously, we obtain
\begin{equation*}
	C^{\alpha}_{\mathcal{H}}(E_2\cap[x,x+1])>0,\qquad \mbox{for any }\,x\in\R.
\end{equation*}
Thus, $E_1$, $E_2$ are $\alpha$-thick sets.

Third, we establish the following
\begin{equation}\label{equ2.19}
	\left\|\left\{\sup_{E_1\cap[k,k+L]} |f_{N_j}(x)|\;\right\}_{k\in\mathbb{Z}}\right\|_{l^2(\mathbb{Z})}^2+\left\|\left\{\sup_{E_2\cap[k,k+L]} |\hat{f_{N_j}}(x)|\;\right\}_{k\in\mathbb{Z}}\right\|_{l^2(\mathbb{Z})}^2\to 0,\quad j\to\infty.
\end{equation}
For brevity, we focus on proving 
\begin{equation}\label{equ2.20}
	\left\|\left\{\sup_{E_1\cap[k,k+L]} |f_{N_j}(x)|\;\right\}_{k\in\mathbb{Z}}\right\|_{l^2(\mathbb{Z})}^2\to 0,\quad j\to\infty.
\end{equation}
The proof for $E_2$ follows in the same way due to \eqref{eq-f-n-01-02}.
From the construction  of $E_1$ (see \eqref{eq-E1-6-2}), it follows that for any $j\in\mathbb{N}$ and $x\in E_1$, there exists some $m\in\mathbb{Z}$, such that
\begin{equation*}
	x\in [\frac{\pi}{N_j}m,\frac{\pi}{N_j}m+\frac{\delta}{N_j^{1/\beta}}].
\end{equation*}
This implies 
$\pi m\leq N_jx\leq \pi m+\frac{\delta}{N_j^{1/\beta-1}},$
which in turn yields
\begin{equation*}
	|\sin{(N_jx)}|\leq |\sin{(\frac{\delta}{N_j^{1/\beta-1}})}|\leq \frac{\delta}{N_j^{1/\beta-1}}.
\end{equation*}
Then, the $l^2$ norm can be bounded as 
\begin{align}\label{equ2.21}
	\quad\|\{\sup_{E_1\cap[k,k+1]} |f_{N_j}(x)|\;\}_{k\in\mathbb{Z}}\|_{l^2(\mathbb{Z})}^2\notag 
	&=\sum_{k\in\mathbb{Z}} \sup_{E_1\cap[k,k+1]}|\sin{(N_jx)}|^2|e^{-(x+\pi N_j)^2/2}-e^{-(x-\pi N_j)^2/2}|^2\notag \\
	&\leq \sum_{k\in\mathbb{Z}} \left(\sup_{E_1\cap[k,k+1]}\sin^2{(N_jx)}\cdot\sup_{[k,k+1]}\left(e^{-(x+\pi N_j)^2}+e^{-(x-\pi N_j)^2}-2e^{-(\pi N_j)^2}e^{-x^2}\right)\right)\notag \\
	&\leq \frac{\delta^2}{N_j^{2/\beta-2}}\cdot\sum_{k\in\mathbb{Z}}\left(\sup_{[k,k+1]}\left(e^{-(x+\pi N_j)^2}+e^{-(x-\pi N_j)^2}\right)\right).
\end{align}
Note that the summation of Gaussian terms admits the following uniform bound
\begin{align}\label{equ2.23}
	\quad\sum_{k\in\mathbb{Z}}\left(\sup_{[k,k+1]}\left(e^{-(x+\pi N_j)^2}+e^{-(x-\pi N_j)^2}\right)\right)
	&\leq 2\left(\sum_{k\in\mathbb{Z}}e^{-k^2}+3\right)
	\leq 2(\sqrt{\pi}+3).
\end{align}
Since $\beta<1$ by hypothesis and $N_j\to\infty$ as $j\to\infty$,
the desired convergence in (\ref{equ2.20}) follows from \eqref{equ2.21} and \eqref{equ2.23}. 

Finally, we complete the proof of Theorem \ref{thm4}.
Define the initial data
\begin{equation*}
	\tilde u_{0,j}(x)=e^{-ix^2/2}f_{N_j}(x),\quad j\in\mathbb{N}.
\end{equation*}
By \eqref{equ2.24}, the corresponding solution at time $T=\frac12$ becomes
\begin{equation*}
	\tilde u_j(\frac{1}{2},x)=\frac{1}{i^{1/2}}e^{ix^2/2}\widehat{e^{i\xi^2/2}\tilde u_{0,j}(\xi)}(x)=\frac{1}{i^{1/2}}e^{ix^2/2}\hat f_{N_j}(x).
\end{equation*}
From (\ref{equ2.19}), we obtain
\begin{equation}\label{equ2.25}
	\|\{\sup_{E_1\cap[k,k+1]} |\tilde u_{0,j}(x)|\;\}_{k\in\mathbb{Z}}\|_{l^2(\mathbb{Z})}^2+\|\{\sup_{E_2\cap[k,k+1]} |\tilde u_j(\frac{1}{2},x )|\;\}_{k\in\mathbb{Z}}\|_{l^2(\mathbb{Z})}^2\to 0, \quad j\to\infty.
\end{equation}
On the other hand,  by \eqref{eq-f-n-01}, we have
\begin{equation}\label{equ2.27}
	\|\tilde u_{0,j}(x)\|_{L^2(\R)}^2=\| f_{N_j}(x)\|_{L^2(\R)}^2\to\sqrt{\pi},\quad j\to\infty.
\end{equation}
Combining \eqref{equ2.25} and \eqref{equ2.27} yields the proof of  statement $(ii)$.

Therefore the proof of Theorem \ref{thm4} is complete. \qed

\section*{Acknowledgments}
S. Huang was supported by the National Natural Science Foundation of China under grants 12171178 and 12171442 and the Guangdong Basic and Applied Basic Research Foundation (2026B1515020075).


\end{document}